\title{Hausdorff dimension in inhomogeneous Diophantine approximation}            
\date{\today}
\author{Yann Bugeaud}
\address{
 IRMA, UMR 7501,
 Universit\'e de Strasbourg, CNRS,
 7, rue Ren\'e Descartes, 67000 Strasbourg, France.
{\it bugeaud@math.unistra.fr}}
\author{Dong Han Kim}
\address{
 Department of Mathematics Education,
Dongguk University -- Seoul,
30 Pildong-ro 1-gil, Jung-gu, Seoul, 04620 Korea. 
{\it kim2010@dongguk.edu}}
\author{Seonhee Lim}
\address{
 Department of Mathematical Sciences and Research Institute of Mathematics,
Seoul National University,
Kwanak-ro, Kwanak-gu, Seoul, Korea.
{\it slim@snu.ac.kr}}
\author{Micha{\l}  Rams}
\address{
 Institute of Mathematics,
Polish Academy of Sciences,
ul. \'Sniadeckich 8, 00-656 Warszawa, Poland.
{\it rams@impan.gov.pl}}
\theoremstyle{plain}
\newtheorem{lem}{Lemma}[section]
\newtheorem{prop}[lem]{Proposition}
\newtheorem{thm}[lem]{Theorem}
\theoremstyle{definition}
\newtheorem{defn}[lem]{Definition}
\theoremstyle{remark}
\newtheorem*{rem}{Remark}
\newtheorem*{que}{Question}
\newtheorem*{obs}{Observation}
\newtheorem*{pro}{Problem}
\numberwithin{equation}{section}
\newcommand{\cJ}{\mathcal J}
\newcommand{\R}{\mathbb R}
\newcommand{\Z}{\mathbb Z}
\renewcommand{\epsilon}{\varepsilon}
\newcommand{\ep}{\epsilon}
\newcommand{\Bad}{\mathrm{Bad}}
\def\cA{{\mathcal P}}
\def\cS{{\mathcal S}}
\def\bfy{{\bf y}}
\def\bfx{{\bf x}}
\def\uz{{\mathbf z}}
\def\eps{{\varepsilon}}
\def\bR{{\mathbb R}}
\def\bZ{{\mathbb Z}}
\def\utheta{{\underline{\theta}}}
\def\ux{{\mathbf{x}}} 
\def\uy{{\mathbf{y}}}
\def\uq{{\mathbf{q}}}
\def\omc{{\hat{\omega}}}
\def\Card{{\rm Card}}
\begin{document}

\maketitle

\def\thefootnote{}
\footnote{2010 {\it Mathematics
Subject Classification}: Primary 11K60 ; Secondary 28A80, 37E10.}   
\def\thefootnote{\arabic{footnote}}
\setcounter{footnote}{0}

\begin{abstract}
Let $\alpha$ be an irrational real number.  
We show that the set of $\epsilon$-badly approximable numbers
\[
\mathrm{Bad}^\varepsilon (\alpha) := \{x\in [0,1]\, : \, \liminf_{|q| \to \infty} |q| \cdot \| q\alpha -x \| \geq \varepsilon \}
\]
has full Hausdorff dimension for some positive $\epsilon$ if and only if $\alpha$ is singular on average. 
The condition is equivalent to the average $\frac{1}{k} \sum_{i=1, \cdots, k} \log a_i$ of the logarithms   
of the partial quotients $a_i$ of $\alpha$ going to infinity with $k$. 
We also consider one-sided approximation, 
obtain a stronger result when $a_i$ tends to infinity, and 
establish a partial result in higher dimensions. 
\end{abstract}

\section{Introduction and results}    

A well-known result of Minkowski \cite{Min01}
asserts that, for every irrational real number $\alpha$ and every real number $x$, 
which is not of the form $m \alpha + n$ for integers $m,n$, 
there are infinitely many integers $q$ with
$$
|q| \cdot \| q \alpha - x \| < {1 \over 4},
$$
where $\| z \|$ denotes the distance from $z$ to its nearest integer.     
This result was complemented by Kim \cite{Kim07} 
who proved that, for every irrational real number $\alpha$, 
the set  of real numbers $x$ in $[0,1]$ such that 
$
 \liminf_{q \rightarrow \infty} q \cdot \|q\alpha-x \| = 0
$
has full Lebesgue measure. Subsequently, it has been proved in \cite{BHKV} 
(see also \cite{Tseng09,Mosh11}) that the complement set
is large, namely, 
for every irrational real number $\alpha$, the set 
$$
\Bad (\alpha) := \{x\in [0,1]\, : \, \liminf_{|q| \rightarrow \infty} |q| \cdot \|q\alpha-x \| > 0 \} 
$$
has Hausdorff dimension 1. 
These results seem to indicate that all 
irrational real numbers behave in a same way, independently of  
their Diophantine properties. 

However, if we refine our question and ask whether, 
for some positive real number $\ep$, the set  

\[
\Bad^\ep (\alpha) := \{x\in [0,1]\, : \, \liminf_{|q| \to \infty} |q| \cdot \| q\alpha -x \| \geq \epsilon\} 
\]
is also of Hausdorff dimension 1, then we can distinguish
$\alpha$'s of distinct Diophantine properties. This is precisely the theme of this article.

\subsection{Real numbers}

It has been proved recently in \cite{LSS} that, for almost every $\alpha$, we have 
\begin{equation} \label{eqn:0.1}
\forall \ep>0, \,\,
\dim_H \Bad^\ep (\alpha)<1.\end{equation} 
In the same article, a sufficient condition which ensures  \eqref{eqn:0.1}, 
called heaviness (see \eqref{eqn:1.2} below), was given. 

Our first main result gives necessary and sufficient conditions for \eqref{eqn:0.1} in two directions,      
one in terms of the convergents of $\alpha$ and the other in terms of singularity of $\alpha$.

Throughout the article, for 
$\alpha=  [a_0; a_1, a_2, \cdots] := a_0 + \frac{1}{a_1 + \frac{1}{a_2 + \frac{1}{a_3 + \cdots}} }$, its $k$-th convergent is denoted by 
$$
\frac{p_k}{q_k} = [a_0; a_1, \cdots, a_k] 
=a_0 + \frac{1}{a_1 + \frac{1}{a_2 + \frac{1}{a_3 + \cdots+\frac{1}{a_k}}} }.
$$
We call $(a_k)_{k \ge 0}$ the \emph{partial quotients} of $\alpha$.
An irrational real number $\alpha$ is called \emph{singular on average} if, for every $c>0$,
$$ 
\lim_{N \to \infty} \frac{1}{N} \, \Card \{ \ell \in \{1, \cdots, N\} 
:  \| q \alpha \| \le c 2^{-\ell} \mathrm{\;has \;a \;solution \; with \;} 0 < q \le 2^{\ell} \} =1.     
$$
We establish in Section 4 that this property is equivalent to the fact that the sequence 
$(q_k^{1/k})_{k \ge 1}$ tends to infinity, stated in the following theorem.     

\begin{thm} \label{thm:general}
Let $\alpha$ be an irrational real number and, for $k \ge 1$, let  
$q_k$ denote the denominator of its $k$-th convergent. 
Then the following are equivalent.
\begin{enumerate}
\item[(i)]
For some $\epsilon>0$, the set $\Bad^\eps (\alpha)$ has full Hausdorff dimension.       
\item[(ii)]
$\lim_{k \to \infty} q_k^{1/k}=\infty.$
\item[(iii)] 
$\alpha$ is singular on average.
\end{enumerate}
Moreover, if $\dim_H \Bad^\eps (\alpha) =1$ for some 
$\ep>0$, then so it is for every $\ep$ in $(0, 2^{-4} \cdot 3^{-3})$.      
\end{thm}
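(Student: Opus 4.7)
The proof follows the cycle (ii) $\Rightarrow$ (i), (i) $\Rightarrow$ (iii), (ii) $\Leftrightarrow$ (iii). The last equivalence is purely continued-fraction-theoretic and will be deferred to Section~4: rewriting the singular-on-average condition with constant $c$ as the requirement $\ell \in [\log_2 q_k,\, \log_2 q_{k+1} + \log_2 c]$ for some $k$, asking for density $1$ of such $\ell$ in $[0,N]$ for every $c>0$ is equivalent to $\#\{k : q_k \le 2^N\} = o(N)$, i.e.\ to $q_k^{1/k} \to \infty$.

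For the main implication (ii) $\Rightarrow$ (i), suppose $q_n^{1/n} \to \infty$ and fix any $\epsilon \in (0,\, 2^{-4} \cdot 3^{-3})$. I will build a Cantor-like $E \subset \Bad^\epsilon(\alpha)$ of Hausdorff dimension arbitrarily close to $1$. Choose a subsequence $(n_k)$ along which the ratio $q_{n_{k+1}}/q_{n_k}$ grows very rapidly, as permitted by (ii). At stage $k$, each surviving interval from stage $k-1$ is subdivided using the finite orbit $\{j\alpha \bmod 1 : q_{n_k} < |j| \le q_{n_{k+1}}\}$; by the three-distance theorem this produces roughly $q_{n_{k+1}}/q_{n_k}$ sub-intervals of comparable length, and we retain only those staying at distance at least $\epsilon/|j|$ from every orbit point $j\alpha$. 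Because $\epsilon$ is small relative to the minimum gap in the partition, the retained proportion of length is close to $1$, and a standard mass-distribution estimate then yields $\dim_H E$ close to $1$, uniformly over $\epsilon$ below the threshold.

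For the converse (i) $\Rightarrow$ (iii), I appeal to \cite{LSS}: heaviness in the sense of \eqref{eqn:1.2} implies $\dim_H \Bad^\epsilon(\alpha) < 1$ for every $\epsilon>0$, and one checks that failure of singular on average forces heaviness, so contraposition gives the required implication. The \emph{moreover} clause is then automatic from the construction above, which produces a full-dimensional subset of $\Bad^\epsilon(\alpha)$ for every $\epsilon$ in the specified range as soon as (ii) holds. I expect the main obstacle to be the Cantor construction itself: the $\epsilon$-avoidance condition has to be maintained at \emph{every} integer scale $|q|$, not only near the selected cutoffs $q_{n_k}$, and this is where the three-distance theorem is indispensable and where the explicit threshold $\epsilon < 2^{-4} \cdot 3^{-3}$ originates, from the requirement that the deleted $\epsilon/|j|$-neighborhoods fit strictly inside the gaps produced by the orbit partition at each scale.
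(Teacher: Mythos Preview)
Your proposed argument for (i) $\Rightarrow$ (iii) is wrong. You write that ``failure of singular on average forces heaviness,'' and then invoke the result of \cite{LSS} that heaviness implies $\dim_H \Bad^\eps(\alpha)<1$. But this implication between not-singular-on-average and heaviness is false, and the paper itself provides the counterexample in the paragraph following Theorem~\ref{thm:general}: the number $\alpha=[0;a_1,a_2,\dots]$ with $a_n=3^n$ when $n$ is a power of $2$ and $a_n=1$ otherwise satisfies $\liminf_k q_k^{1/k}<\infty$ (hence is \emph{not} singular on average) while it is \emph{not} heavy. So the heaviness criterion from \cite{LSS} is strictly weaker than what is needed here, and contraposition through it cannot close the circle. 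The paper instead proves (i) $\Rightarrow$ (ii) directly: assuming $C_0:=\liminf_k (\log q_k)/k<\infty$, one covers $\Bad^\eps_K(\alpha)$ using the partitions $\cA^{(k_i)}$ of the circle by orbit points, chooses a subsequence $(k_i)$ with $q_{k_i}/q_{k_{i+1}}<\eps/12$ and bounded gaps $k_{i+1}-k_i\le C_\eps$, and shows via a counting argument (Lemma~\ref{lem_subintervals}) that a fixed proportion $\ge \eps/32$ of level-$(k_{i+1})$ intervals inside each level-$k_i$ interval must be discarded. Along a subsequence where $q_{k_i}\le M^i$ one then gets $\dim_H\Bad^\eps_K(\alpha)\le 1+\log(1-\eps/32)/\log M<1$.

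Your route for (ii) $\Rightarrow$ (i) is also not the paper's. The paper does not build a Cantor set directly inside $\Bad^\eps(\alpha)$ via the three-distance theorem. Instead it proceeds in two steps: first (Theorem~\ref{ET}) it proves an Erd\H os--Taylor-type statement that for any lacunary integer sequence $(n_k)$ with $n_k^{1/k}\to\infty$ the set $\cS_\delta=\{x:\|n_kx\|>\delta$ for all large $k\}$ has full dimension; second (Theorem~\ref{thm:1.3}) it extracts a subsequence $(q_{\varphi(i)})$ of the convergent denominators satisfying $q_{\varphi(i)}\ge R\,q_{\varphi(i-1)}$ and $q_{\varphi(i-1)+1}\ge q_{\varphi(i)}/R$, applies Theorem~\ref{ET} to this subsequence, and then transfers via the elementary inequality $\|q_{\varphi(\ell)}x\|\le q_{\varphi(\ell)}\cdot\|k\alpha-x\|+|k|\cdot\|q_{\varphi(\ell)}\alpha\|$. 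Optimising over $\delta$ and the scale parameter $M$ is what produces the explicit constant $(2^4\cdot 3^3)^{-1}$; in your sketch it is not clear how the threshold would emerge from a direct avoidance argument, since you never separate the homogeneous problem from the inhomogeneous one.
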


The equivalence between Conditions (i) and (ii) is proved in Section 2. 

According to \cite{LSS}, an irrational real number $\alpha = [a_0 ; a_1, a_2, \ldots]$  is called 
{\it heavy} if, for every $\delta > 0$, there exists $\eta > 0$ such that
\begin{equation}\label{eqn:1.2}
\liminf_{N \to \infty} \, \frac{1}{N} \sum_{k=1}^N \max\{ \log \eta a_k, 0\} \le \delta.
\end{equation}
It was shown in \cite{LSS} that if $\alpha$ is heavy, 
then $\dim_H \Bad^\ep (\alpha)<1$ for all $\ep>0$. 
Theorem \ref{thm:general} shows that the converse does not hold. 
Indeed, consider $\alpha = [0 ; a_1, a_2, \ldots]$ whose continued fraction expansion 
is defined by $a_n = 1$, for $n$ not being an integer power of $2$,    
and by $a_n = 3^n$ otherwise. 
Then, we observe that 
$(q_k^{1/k})_{k \ge 1}$ is bounded while $\alpha$ is not heavy.

Condition (ii) of Theorem \ref{thm:general} is satisfied 
when the partial quotients $(a_i)$ of
$\alpha$ tends to infinity, in which case we can slightly strengthen 
the last statement of Theorem \ref{thm:general}. 

\begin{thm} \label{thm:2s}
Let $\alpha$ be an irrational real number whose sequence of   
partial quotients tends to infinity.       
Then, for every $\ep < 1/16$, we have $\dim_H \Bad^\ep (\alpha)=1$.
\end{thm}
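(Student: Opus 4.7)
The plan is to strengthen the Cantor-set construction used for (ii)$\Rightarrow$(i) in Theorem~\ref{thm:general}. Under the stronger hypothesis $a_k\to\infty$, all losses that are only bounded by an absolute constant become negligible compared with the growth $a_{k+1}$; this is exactly what permits pushing the admissible threshold from $2^{-4}\cdot 3^{-3}$ up to $2^{-4}=1/16$. I would build a sequence of finite families $\mathcal F_k$ of pairwise disjoint closed intervals of length comparable to $\|q_{k-1}\alpha\|\asymp 1/q_k$, each of which is disjoint from every forbidden neighbourhood
\[
B_q := \{x\in[0,1]\,:\,\|q\alpha-x\| < \ep/|q|\},\qquad 0<|q|<q_{k+1},
\]
and then show $\dim_H K=1$ for $K:=\bigcap_k\bigcup\mathcal F_k\subset\Bad^\ep(\alpha)$.

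The starting point is the three-distance theorem: the points $\{q\alpha\bmod 1:0<|q|<q_{k+1}\}$ partition $[0,1]$ into consecutive arcs of at most three lengths, the two smallest being $\|q_k\alpha\|$ and $\|q_{k-1}\alpha\|$. I would take $\mathcal F_k$ to be the arcs of length $\|q_{k-1}\alpha\|$, slightly contracted at the endpoints so as to keep a safety buffer around each point $q\alpha$. Passing from level $k$ to level $k+1$, each $I\in\mathcal F_k$ is further subdivided by the $\approx a_{k+1}$ new points $q\alpha\bmod 1$ with $q_k\le|q|<q_{k+1}$, producing that many children of length $\asymp 1/q_{k+1}$. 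A child is discarded exactly when it meets some $B_q$ with $|q|<q_{k+1}$.

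The central counting step is to show that only $\approx 16\ep\, a_{k+1}$ children of each $I$ are destroyed. For $|q|$ in the new range $q_k\le|q|<q_{k+1}$, the neighbourhood $B_q$ has radius $\ep/|q|\le\ep/q_k$; since the child width is $\asymp 1/q_{k+1}\asymp 1/(a_{k+1}q_k)$, each such $B_q$ wipes out at most a constant number of children, and summing over $q$ of both signs and the two sides of $B_q$ yields the factor $16$. For the old range $|q|<q_k$, the hypothesis $a_k\to\infty$ is essential: the total measure $\sum_{|q|<q_k}2\ep/|q|$ is of logarithmic order, which when compared against the typical child width $\asymp 1/(a_{k+1}q_k)$ spoils only $O(\log q_k)=o(a_{k+1})$ children. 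Consequently
\[
N_k := \#\{\text{surviving children of }I\} \;\ge\; (1-16\ep)\,a_{k+1} - o(a_{k+1}),
\]
uniformly in $I\in\mathcal F_k$, and in particular $N_k\to\infty$ because $\ep<1/16$.

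Finally I would apply the mass distribution principle to the uniform branching measure: for $x\in K$,
\[
\underline\dim_\mu(x)\;\ge\;\liminf_{k\to\infty}\frac{\log N_1+\cdots+\log N_k}{\log q_{k+1}}.
\]
Since $\log q_{k+1}=\sum_{j\le k}\log a_{j+1}+O(k)$ and $\log N_j=\log a_{j+1}+\log(1-16\ep)+o(1)$, while $\log a_{j+1}\to\infty$, both numerator and denominator are asymptotic to $\sum_{j\le k}\log a_{j+1}$, and the ratio tends to $1$. Thus $\dim_H K=1$, giving the theorem.

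The main obstacle is the sharp bookkeeping underlying the bound $N_k\gtrsim(1-16\ep)a_{k+1}$: one must simultaneously control the factor $2$ for $\pm q$, the factor $2$ for the two sides of each $B_q$, and the precise ratio between $\|q_{k-1}\alpha\|$, $\|q_k\alpha\|$ and $1/q_{k+1}$, while using $a_k\to\infty$ to justify that the contribution of all \emph{old} forbidden neighbourhoods is asymptotically negligible—this is precisely the step where the general proof of Theorem~\ref{thm:general} must pay the extra $3^{-3}$ factor.
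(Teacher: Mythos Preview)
Your direct Cantor construction is a different route from the paper's proof, which is a two-line transference. The paper applies Theorem~\ref{ET} with $n_k=q_k$: since $q_{k+1}/q_k\to\infty$, one has $\dim_H\mathcal S_\delta=1$ for every $\delta<1/2$. Then for $x\in\mathcal S_\delta$ and any nonzero integer $k$, choosing $\ell$ with $q_\ell\le\frac{2}{\delta}|k|<q_{\ell+1}$ and inserting $y=q_\ell$ into the elementary inequality $\|q_\ell x\|\le q_\ell\,\|k\alpha-x\|+|k|\cdot\|q_\ell\alpha\|$ yields $|k|\cdot\|k\alpha-x\|\ge\delta^2/4$; letting $\delta\uparrow 1/2$ gives the threshold $1/16$. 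No child-counting is needed, and the constant $1/16=\delta^2/4$ emerges from the transference, not from a combinatorial $2\times2\times\cdots$ factor.

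A direct construction along your lines can be carried out (Section~3 does it for the one-sided problem), but your counting step contains a genuine error. You assert that for $q_k\le|q|<q_{k+1}$ each $B_q$ has radius at most $\ep/q_k$ and therefore destroys $O(1)$ children. But child width is $\|q_k\alpha\|\asymp 1/q_{k+1}$, so the ratio radius/width is $\asymp\ep a_{k+1}\to\infty$: a single $B_q$ with $|q|$ near $q_k$ kills of order $\ep a_{k+1}$ children, not a bounded number, and the asserted bound $N_k\ge(1-16\ep)a_{k+1}$ does not follow. The old-range estimate has the same defect: the total measure $\sum_{|q|<q_k}2\ep/|q|\asymp\ep\log q_k$, divided by child width $\asymp 1/q_{k+1}$, gives $\asymp\ep\, q_{k+1}\log q_k$ destroyed children globally, not the $O(\log q_k)$ you claim. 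The correct repair is not to count deletions at all but to \emph{select} children: keep only $I_m^{(k+1)}$ with $m$ in a window such as $(\ep^{1/2}q_{k+1},\,(1-\ep^{1/2})q_{k+1})$, using that within a parent the new points $m\alpha$ are monotone in $m$, so the dangerous small-$m$ points cluster at one end (cf.\ Lemma~\ref{lem:induc}). Intersecting the analogous windows for $+q$ and $-q$ then forces $2\ep^{1/2}<1/2$, i.e.\ $\ep<1/16$.
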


Theorem \ref{thm:2s} is proved in Section 2. 

An analogue for one-sided approximation   
of Minkowski's result mentioned at the beginning of the introduction was   
obtained by Khintchine \cite{Kh35}, who established that, 
for every irrational real number $\alpha$, every real number $x$, and every positive $\ep$, 
there are infinitely many positive integers $q$ with
$$
q \cdot \| q \alpha - x \| < {1 + \ep \over \sqrt{5}}. 
$$
This statement motivates the study of the set
\[
\Bad^{\ep}_+ (\alpha) := \{x\in [0,1] \, : \, \liminf_{q \to + \infty} q \cdot \| q \alpha -x \| \geq \ep\}.
\]
Our main result in this direction is the following, more precise, theorem.

\begin{thm} \label{thm:14}
Let $\alpha$ be an irrational whose sequence of partial quotients    
tends to infinity. Then, for every $\ep < 1/4$, we have
$$
\dim_H  \{x\in [0,1] \, ; \, \liminf_{q \to \infty} q \cdot \| q \alpha -x \| = \ep\} =1,
$$
while, for any $\ep>1/4$, we have
$$
 \{x\in [0,1] \, ; \, \liminf_{q \to \infty} q \cdot \| q \alpha -x \| = \ep\}=\emptyset.
$$
\end{thm}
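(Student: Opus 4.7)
\textbf{Plan for Theorem~\ref{thm:14}.}
The theorem has two parts: the emptiness of the set when $\ep > 1/4$, and its full Hausdorff dimension when $\ep \in (0, 1/4)$.

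For the emptiness part, the plan is to show that $\liminf_{q\to+\infty} q\|q\alpha - x\| \le 1/4$ for every $x \in [0,1]$. Fix $x$ and a large $k$, and set $\beta_k = q_k\alpha - p_k$. Every positive integer $q < q_{k+1}$ decomposes uniquely as $q = nq_k + r$ with $0\le n\le a_{k+1}$ and $0 \le r < q_k$, giving $q\alpha \equiv n\beta_k + r\alpha \pmod 1$. The orbit $\{q\alpha \bmod 1 : 0 < q < q_{k+1}\}$ therefore splits into roughly $q_k$ clusters indexed by $r$, each containing about $a_{k+1}$ equally spaced points with common spacing $|\beta_k|$ and total arc-length $\approx |\beta_{k-1}| \approx 1/q_k$. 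Given $x$, locate its cluster and position $n$ within it; then $\|q\alpha - x\| \le |\beta_k|/2$ for $q = nq_k + r$. The bound $q\|q\alpha - x\| \le q \cdot |\beta_k|/2$ is worst when $n\approx a_{k+1}/2$, in which case $q \approx q_{k+1}/2$ and $q\|q\alpha - x\| \le q_{k+1}|\beta_k|/4$. The identity $q_{k+1}|\beta_k| + q_k|\beta_{k+1}| = 1$, together with $q_k|\beta_{k+1}| \le 1/a_{k+2} \to 0$ (by $a_k \to \infty$), yields $q\|q\alpha - x\| \le 1/4 + o(1)$. (If $x$ is close to the endpoint of a cluster, it lies within $|\beta_{k+1}|$ of an orbit point at time at most $q_k + q_{k-1}$, giving a sharper bound.) Taking $k \to \infty$ produces $\liminf \le 1/4$, hence emptiness.

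For the full-dimension part, I would adapt the Cantor construction underlying Theorem~\ref{thm:2s} so as to encode both $\liminf \ge \ep$ and $\liminf \le \ep$. Inductively on $k$, starting with intervals of length $\sim 1/q_k$, remove from each the neighborhood of radius $\ep/q$ around every orbit point $q\alpha \bmod 1$ with $q \in [q_{k-1}, q_k)$: this enforces $\|q\alpha - x\| \ge \ep/q$ for every such $q$, hence $\liminf \ge \ep$. To force $\liminf \le \ep$ as well, along an infinite subsequence of scales I would position the surviving sub-interval so that one of its endpoints lies at distance exactly $\ep/q + o(1/q)$ from some orbit point. Full Hausdorff dimension then follows from a standard mass-distribution argument, using $a_k \to \infty$ to ensure that the proportion of length removed at each step tends to~$0$.

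The main obstacle is the sharp $1/4$ bound in the emptiness part: it strictly improves Khintchine's classical constant $1/\sqrt{5}$ for one-sided inhomogeneous approximation, and relies crucially on $a_k \to \infty$ through the asymptotic $q_{k+1}|\beta_k| \to 1$ and the precise cluster structure described above. A secondary obstacle is the simultaneous enforcement of $\liminf \ge \ep$ and $\liminf \le \ep$ in the Cantor construction while preserving full Hausdorff dimension.
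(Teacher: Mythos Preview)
Your overall strategy is sound for both halves, and for the full-dimension part it is essentially the paper's approach: the paper builds the Cantor set $F$ by restricting, at each level $k$, to those partition intervals $I_n^{(k)}$ with index $n$ in a narrow window $(\ep^{1/2}q_k+q_{k-1},\,(\ep^{1/2}+\gamma_k)q_k+q_{k-1})$. The appearance of $\ep^{1/2}$ is precisely what makes \emph{both} constraints work at once: it guarantees $I_{n_2}^{(k+1)}$ avoids every ball $B(n\alpha,\ep/n)$ with $n\le q_{k+1}$ (Lemma~\ref{lem:induc}), while also forcing $I_{n_2}^{(k+1)}\subset B((n_1+q_{k-1})\alpha,\ep_k/(n_1+q_{k-1}))$ with $\ep_k\to\ep$ (Lemma~\ref{lem:check}). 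Your plan ``position the surviving sub-interval at distance exactly $\ep/q+o(1/q)$'' is exactly this, but you have not identified the threshold $\ep^{1/2}$, and that is where the work lies.

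For emptiness, your direct route differs from the paper's, which instead runs a contradiction: assuming $q\|q\alpha-x\|\ge\ep-\delta$ for all large $q$, one tracks the index $n_k$ of the level-$k$ partition interval containing $x$ and shows $n_k/q_k - n_{k+1}/q_{k+1} > \ep-2\delta-1/4$, forcing $n_k/q_k\to-\infty$. Your approach can be made to work, but the argument as written has a gap: you assert that $q\|q\alpha-x\|\le q\cdot|\beta_k|/2$ is worst at $n\approx a_{k+1}/2$, yet that bound alone is monotone in $n$ and is worst at $n\approx a_{k+1}$, where it gives only $\lesssim 1/2$. The reason the true worst case sits at $a_{k+1}/2$ with value $1/4$ is that for larger $n$ (i.e.\ $x$ closer to the endpoint $n_k\alpha$) the \emph{endpoint} approximation $n_k\cdot\|n_k\alpha-x\|\le q_k\cdot t$ takes over and beats $1/4$; the two bounds cross at the midpoint. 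You allude to endpoints only in the extreme case ``$x$ within $|\beta_{k+1}|$ of a cluster endpoint'', which is not enough—you must invoke the endpoint bound throughout the range $n>a_{k+1}/2$. Once you do this balancing explicitly (and check that the chosen $q$'s tend to infinity, which holds unless $x$ is an orbit point), your argument goes through; but without it the claimed $1/4$ is unjustified.
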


It follows that 
$
\dim_H \Bad^\ep_+  (\alpha)=1,
$ for $\epsilon < 1/4$,
while
$
\Bad^\ep_+ (\alpha) 
$ is empty for any $\ep>1/4$.  
It still remains to determine the Hausdorff dimension of $\Bad^{1/4}_+ (\alpha)$. 
Theorem \ref{thm:14} is proved in Section 3. 

The proof that (ii) implies (i) in Theorem \ref{thm:general} rests on arguments 
already present in \cite{Cas,BHKV,BuLa05b}, which extend to Diophantine 
approximation of matrices. We discuss this more general question in the following subsection.

\subsection{Real matrices}    

If  $\ux$ is a  (column) vector  in $\R^n$, we denote by $\vert
\ux \vert$
the  maximum of the   absolute values of its coordinates. Define
$$
\Vert \ux \Vert = \min_{\uz \in \Z^n} \vert \ux -\uz \vert.
$$

Fix $m, n$ in $\mathbb{N}$ and let $A$ be an $n \times m$ real matrix. 
For $\eps>0$, we define the set
$$
\Bad^{\eps} (A)  
:= \{\ux \in [0, 1]^n :  \liminf_{\uq \in \Z^m}   |\uq|^{m/n} \cdot \| A \uq - \ux \| \ge \eps \} 
$$
and we put
$$ 
\Bad(A) := \underset{\eps>0}{\bigcup} \, \Bad^\eps (A)
= \{\ux \in [0, 1]^n :  \liminf_{\uq \in \Z^m}   |\uq|^{m/n} \cdot \| A \uq - \ux \|  > 0  \},  
$$
$$
\Bad^{\infty}(A) := \underset{\eps>0}{\bigcap} \, \Bad^\eps (A)
 = \{\ux \in [0, 1]^n :   \liminf_{\uq \in \Z^m}   |\uq|^{m/n} \cdot  \| A \uq - \ux \| = + \infty \}.  
$$
Theorem 1 of  \cite{BHKV} asserts that 
\begin{equation}\label{eqn:2.0}
\dim_H \Bad  (A) = n.     
\end{equation}

Before stating our main result in higher dimension, let us introduce some definitions and explain the 
general principle behind the proof of \eqref{eqn:2.0}. 
Dirichlet's Theorem implies that, for any $X > 1$,
the inequalities
$$
\Vert A\ux \Vert \le  X^{-m/n}
\quad {\rm and} \quad 0 < \vert \ux \vert \le X  
$$
have a solution  $\ux$ in $\Z^m$. 
The following definition 
of singularity goes back to
Khintchine \cite{KhB}.

\begin{defn}
Let $m, n$ be positive integers and $A$ a $n \times m$ real matrix. 
\begin{enumerate}
\item
The matrix $A$ is called \emph{singular} if, for every $c>0$, the 
inequalities \eqref{eqn:1.1}
\begin{equation}\label{eqn:1.1}
\Vert A\ux \Vert \le c \, X^{-m/n}
\quad {\rm and} \quad 0 < \vert \ux \vert \le X
\end{equation}

 have a solution  $\ux$ in $\Z^m$ for any sufficiently large $X$.
\item The matrix $A$ is called \emph{singular on average} if, for every $c>0$,
$$ 
\lim_{N \to \infty} \frac{1}{N} \, \Card \{ \ell \in \{1, \cdots, N\} 
: \mathrm{\; the \; inequalities \;} \eqref{eqn:1.1} \mathrm{\;have \;a \;solution \;for} \; X=2^\ell \} =1.
$$
\item
The matrix $A$ is called \emph{very well uniformly approximable} if there exists
a positive $\eps$ such that the 
inequalities 
\begin{equation} 
\Vert A\ux \Vert \le  X^{- \eps - m/n}
\quad {\rm and} \quad 0 < \vert \ux \vert \le X
\end{equation}
have a solution  $\ux$ in
$\Z^m$ for any sufficiently large $X$.
\end{enumerate}

\end{defn}
If $n=m=1$ and $A=(\alpha)$, then 
we say that $\alpha$ is
\emph{singular
(resp., singular on average, 
very well uniformly approximable) }
if $(\alpha)$ has this property.

\begin{rem}
As far as we are aware, the notion of \emph{singular on average matrices} 
has been introduced in \cite{KKLM}, motivated by the dynamical notion 
of points which escape on average under the action of a semigroup. 
The terminology \emph{very well uniformly approximable} refers to the hat exponents 
introduced in \cite{BuLa05b}. 
\end{rem}

\begin{rem}If  the subgroup $G_A= A \bZ^m + \bZ^n$ of $\bR^n$ 
has rank $\mathrm{rk}_\mathbb{Z} (G_A)$ smaller than $m+n$, 
then there exists arbitrarly large $\ux$ in $\Z^m$ 
such that $\Vert A \ux \Vert = 0.$ 
Throughout the paper, we 
consider only matrices $A$ for which $\mathrm{rk}_\mathbb{Z}(G_A)=m+n$. 
\end{rem}

When $m=n=1$, 
using the theory of continued fractions, one can prove that, for any 
irrational real number $\xi$, there are arbitrarily large integers $X$ such that
the inequalities
$$
\Vert q \xi || \le {1 \over 2X} \quad {\rm and} \quad 0 < q \le X    
$$
have no integer solutions; see \cite{Kh26b} or Proposition 2.2.4 of \cite{Bu16}. 
Consequently, there are no singular real irrational numbers and, a fortiori, no very well uniformly 
approximable real irrational numbers neither.   
However, there do exist real irrational numbers which are singular on average; see Section 4.

The proof of  \eqref{eqn:2.0} is based on a transference argument of \cite{BuLa05b} which relates 
classical Diophantine approximation properties of a matrix to its uniform 
inhomogeneous aproximation properties. 
In particular, the Theorem of \cite{BuLa05b} implies that
for every very well uniformly approximable matrix $A$, the set $\Bad^{\infty} (A)$ has full 
Lebesgue measure. 
Note also that for every singular matrix $A$, the set $\Bad^{\infty} (A)$ has full 
Hausdorff dimension. This was proven by Moshchevitin \cite{Mosh11} 
and, independently, by Einsiedler and Tseng \cite{EiTs11}. 
Note that it follows           
from Minkowski's theorem quoted in Section 1 that, for every 
real irrational number $\alpha$, the set $\Bad^\infty ((\alpha))$
is empty. 
 
We can partially extend Theorem \ref{thm:general} to the 
case $(n, m) \not= (1, 1)$. 
To describe our result, we first need to define the notion of best approximation vectors associated to 
a matrix $A = (\alpha_{i,j})$.    
We denote by
$$
M_j(\uy) = (^t A \uy )_j =    \sum_{i=1}^{n}\alpha_{i,j}y_i , \quad \uy ={}^t (y_1, \dots, y_n), 
\quad (1 \le j \le m)
$$
the linear forms  determined by the  columns of $A$ and we set
$$
M(\uy) = \Vert {}^tA\uy\Vert = \max_{1\le j\le m}\Vert M_j(\uy)\Vert .
$$
Observe that the quantity $M(\uy)$ is  positive for all non-zero integer
       $n$-tuples $\uy$,
since we have assumed that $\mathrm{rk}(G_A)=m+n$.
Thus, we can build inductively a sequence of integer vectors
$$
\uy_i = {}^t(y_{i,1}, \dots , y_{i,n}) , \quad (i\ge 1),
$$
called {\it a sequence of  best approximations}
related to the linear  forms
$M_1, \dots , M_m$ and to the supremum  norm,  which satisfies the
following properties:

\begin{enumerate}
\item Setting,
$
\vert \uy_{i }\vert = Y_i  \ \text{ and } \   M_i =  M(\uy_i),
$ we have
$$
1 = Y_1 < Y_2<\cdots  \ \text{ and } \    M_1 > M_2 > \cdots \enspace ,
$$

\item $M(\uy)\ge M_i$ for all non-zero integer vectors   $\uy$ of norm
$\vert \uy
\vert < Y_{i+1} $.
\end{enumerate}
We start the construction
with a  smallest {\it  minimal point } $\uy_1$ in the sense of \cite{DaSc69},
satisfying
$Y_1 = \vert \uy_{1}\vert = 1$
        and   $M(\uy)\ge M(\uy_1) =M_1$
for any  integer point  $\uy$ in $\bZ^n$ with norm $\vert \uy
\vert = 1$.
Suppose that   $\uy_1, \dots , \uy_i$ have already been constructed
in such a way that $M(\uy)\ge M_i$ for all non-zero integer point $\uy$
of norm
$\vert\uy\vert \le Y_i$. Let  $Y$
be the smallest positive integer  $>Y_i$ for which there exists an
integer point $\uz$
verifying  $\vert \uz \vert =Y$ and  $ M(\uz) < M_i$.
The integer  $Y$ does exist by Dirichlet box principle since
$M_i>0$.
Among those points $\uz$, we select an element $\uy$ for which
$M(\uz)$ is minimal.
      We then set
$$
\uy_{i+1} = \uy, \quad
Y_{i+1} = Y, \ \text{ and } \  M_{i+1} =  M(\uy).
$$
The sequence  $(\uy_i)_{i\ge 1}$ obtained in this way clearly satisfies the
desired properties.

Furthermore, as established along the proof of Lemma 1 of \cite{BuLa05b}, we have
$$
Y_{i + 3^{m+n}} \ge 2 Y_{i+1}, \quad i \ge 1.
$$
In the case $m=n=1$, the sequence of best approximations coincides with the 
sequence of denominators of convergents.

The implication (ii) $\Rightarrow$ (i)  of Theorem~\ref{thm:general} extends as follows. 

\begin{thm} \label{thm:1.6}
Let $A$ be an $n \times m$ matrix
and $(\uy_k)_{k \ge 1}$ a sequence of best approximation vectors associated to $A$. 
If $|\uy_k|^{1/k}$ tends to infinity with $k$, then there exists a positive real number $\eps$ such that 
$$
\dim_H \Bad^\eps (A) = n. 
$$
If, furthermore, $|\uy_{k+1}| / |\uy_k|$ tends to infinity, then $\eps$ can be taken
to be any positive real number less than $(4 n)^{-1} (4 m)^{-m/n}$.      
\end{thm}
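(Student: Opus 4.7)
The plan is to extend the Cantor-set construction used for the implication (ii)$\Rightarrow$(i) of Theorem~\ref{thm:general} (going back to \cite{Cas,BHKV,BuLa05b}): replace the denominators of convergents by the norms $Y_k=|\uy_k|$ of the best approximations of the transposed system ${}^tA$, and intervals by axis-parallel cubes in $[0,1]^n$. Fix $\epsilon>0$, eventually constrained by $(4n)^{-1}(4m)^{-m/n}$ under the stronger hypothesis. I would inductively construct a nested family $\mathcal{C}_1\supset\mathcal{C}_2\supset\cdots$ of finite unions of closed cubes in $[0,1]^n$, with $\mathcal{C}_k$ at linear scale $r_k$ comparable to $\epsilon Y_k^{-m/n}$, arranged so that $\bigcap_k\mathcal{C}_k\subset\Bad^\epsilon(A)$.

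At the inductive step, subdivide each parent cube in $\mathcal{C}_k$ into a regular grid of subcubes of side $r_{k+1}$, and delete every subcube that meets a forbidden ball
\[
B\bigl(A\uq\bmod\Z^n,\,\epsilon|\uq|^{-m/n}\bigr),\qquad \uq\in\Z^m\setminus\{0\},\; Y_k\le|\uq|<Y_{k+1}.
\]
Any point of $\bigcap_k\mathcal{C}_k$ then avoids all such balls and hence belongs to $\Bad^\epsilon(A)$. The crucial geometric estimate is that the number $N_k$ of surviving subcubes of each parent cube satisfies $N_k\ge(1-\eta_k)(r_k/r_{k+1})^n$ with $\eta_k$ small. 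This is where the structure of best approximations enters: the minimality of $\uy_{k+1}$, together with the transference argument of Lemma~1 of \cite{BuLa05b}, forces the residues $A\uq\bmod\Z^n$ with $|\uq|<Y_{k+1}$ to concentrate on a family of parallel affine hyperplanes with normal $\uy_{k+1}$ and mutual spacing at least $1/|\uy_{k+1}|$. A lattice-point count along these hyperplanes gives $\eta_k\le 2n\epsilon(4m)^{m/n}<1$ under the stronger hypothesis $Y_{k+1}/Y_k\to\infty$ and the above range for $\epsilon$. Under the weaker hypothesis $Y_k^{1/k}\to\infty$, one instead selects a subsequence $k_1<k_2<\cdots$ so that each $Y_{k_{j+1}}/Y_{k_j}$ is as large as needed, and performs a single joint deletion step per block $[Y_{k_j},Y_{k_{j+1}})$.

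The Hausdorff dimension of $\bigcap_k\mathcal{C}_k$ is then bounded below by the classical Cantor-set estimate
\[
\dim_H\bigcap_k\mathcal{C}_k\ge\liminf_{k\to\infty}\frac{\sum_{j\le k}\log N_j}{\sum_{j\le k}\log(r_j/r_{j+1})}=n+\liminf_{k\to\infty}\frac{\sum_{j\le k}\log(1-\eta_j)}{\sum_{j\le k}\log(r_j/r_{j+1})}.
\]
Since $\log(r_j/r_{j+1})$ is comparable to $(m/n)\log(Y_{j+1}/Y_j)$, the denominator grows like $(m/n)\log Y_k$; under $Y_k^{1/k}\to\infty$ this swamps the $O(k)$ numerator, and under $Y_{k+1}/Y_k\to\infty$ the uniform bound $\eta_j\le 2n\epsilon(4m)^{m/n}$ makes the correction vanish termwise. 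In both cases the lower bound equals $n$, and combined with $\Bad^\epsilon(A)\subset[0,1]^n$ this yields $\dim_H\Bad^\epsilon(A)=n$.

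The main obstacle is the lattice-point count that produces the explicit $\eta_k$. In dimension $1$ the residues $q\alpha\bmod 1$ for $0<q<q_{k+1}$ are automatically $q_{k+1}^{-1}$-separated, so the forbidden-interval count is immediate. For $n>1$ the residues $A\uq\bmod\Z^n$ may cluster in the directions transverse to $\uy_{k+1}$, and recovering the needed sparsity from the dual-approximation inequality, with a constant sharp enough to yield the announced $(4n)^{-1}(4m)^{-m/n}$, is the technical heart of the argument; it specialises to the $1/16$ bound of Theorem~\ref{thm:2s} when $n=m=1$.
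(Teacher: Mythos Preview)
Your plan has the gap you yourself flag in the last paragraph: the lattice-point count bounding the fraction $\eta_k$ of deleted subcubes is neither carried out nor sketched, and there is no evident reason it should give the announced constant. For $|\uq|<Y_{k+1}$ there are of order $Y_{k+1}^m$ residues $A\uq\bmod\Z^n$; while the inequality $\|\uy_k\cdot(A\uq)\|\le m|\uq|\,M_k$ does confine them to thin slabs orthogonal to $\uy_k$, their distribution \emph{within} those slabs is not controlled by the single vector $\uy_k$, and the transverse separation you need would seem to require information about several successive minima. It is not clear that such an argument succeeds at all, let alone with the constant $(4n)^{-1}(4m)^{-m/n}$.

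The paper sidesteps this difficulty by a dual construction. Rather than avoiding the balls $B(A\uq,\epsilon|\uq|^{-m/n})$ directly, it first builds (Theorem~\ref{thm:5.1}) a Cantor set $\cS_\delta\subset[0,1]^n$ of points $\ux$ satisfying the \emph{single} scalar condition $\|\uy_k\cdot\ux\|>\delta$ for every $k$; this replaces ``many balls per level'' by ``one slab per level'', and the counting needed for $\dim_H\cS_\delta=n$ becomes elementary. Then the transference inequality
\[
\|\uy_k\cdot\ux\|\le n\,|\uy_k|\,\|A\uq-\ux\|+m\,|\uq|\,M(\uy_k),
\]
applied with the index $k$ determined by $Y_k\le(2m\delta^{-1})^{m/n}|\uq|^{m/n}<Y_{k+1}$ together with $M(\uy_k)\le Y_{k+1}^{-n/m}$, yields $\|A\uq-\ux\|\ge\tfrac{\delta}{2n}(2m\delta^{-1})^{-m/n}|\uq|^{-m/n}$, whence $\cS_\delta\subset\Bad^\epsilon(A)$; letting $\delta\to 1/2$ gives $(4n)^{-1}(4m)^{-m/n}$. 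This passage to the dual linear form is precisely the device of \cite{Cas,BHKV,BuLa05b} that you cite but do not actually use; adopting it removes your ``main obstacle'' entirely. Under the weaker hypothesis $Y_k^{1/k}\to\infty$ one first extracts, as in Theorem~\ref{thm:1.3}, a subsequence $(\uy_{\varphi(k)})$ with $Y_{\varphi(k)}\ge R\,Y_{\varphi(k-1)}$ and $Y_{\varphi(k-1)+1}\ge R^{-1}Y_{\varphi(k)}$, and then argues identically.
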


We do not know whether the rest of Theorem~\ref{thm:general} extends to matrices, that is, 
whether the properties `$A$ is singular on average' and 
`$Y_k^{1/k}$ tends to infinity' coincide in dimension $m \times n$ 
with $(m, n) \not= (1, 1)$ and also if these conditions are equivalent to the existence of $\eps>0$ 
with $\dim_H \Bad^\eps(A)=n$.  

\section{Badly approximable numbers and the convergents}
In this section, we prove Theorem~\ref{thm:general} (i) $\Leftrightarrow$ (ii) and Theorem~\ref{thm:2s}. 
\subsection{Inhomogeneous approximation using homogeneous approximation} 
In this subsection, we use a result concerning the Hausdorff dimension 
in homogeneous Diophantine approximation to prove Theorem~\ref{thm:2s} 
and  implication (ii) $\Longrightarrow$ (i) of Theorem~\ref{thm:general} 

We start with a corollary of a theorem of Erd\H os and Taylor \cite{ErTa57}, of which we give a proof for the sake of completeness.

\begin{thm}\label{ET}
Fix $0< \delta <1/2$. 
Let $(n_k)_{k \ge 1}$ be an increasing sequence of integers such that 
$n_{k+1} / n_k \ge 4/(1-2\delta)$ 
for $k$ sufficiently large and $\lim n_k^{1/k}=\infty$. The set 
$$
{\cS}_{\delta} = \{x \in [0, 1] : \hbox{there exists $k_0 (x)$ 
such that } \| n_k x \| > \delta   \hbox{ for all $k \ge k_0 (x)$} \}.
$$
has Hausdorff dimension 1.
Moreover, if $\lim n_{k+1} / n_k=\infty$, then $\dim_H S_\delta =1$ for any $\delta$ in $(0, 1/2)$.
\end{thm}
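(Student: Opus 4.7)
The plan is to construct a nested Cantor-like family of compact subsets of $\cS_\delta$ whose intersection has Hausdorff dimension $1$. For each index $k$, the set $G_k := \{x \in [0,1] : \|n_k x\| > \delta\}$ is a union of open intervals of length $(1-2\delta)/n_k$ spaced with period $1/n_k$. I would define inductively a nested family $K_1 \supseteq K_2 \supseteq \cdots$ whose connected components at level $k$ are closed subintervals of length exactly $(1-2\delta)/n_k$ lying in $G_1 \cap \cdots \cap G_k$: within each component of $K_{k-1}$, one keeps the maximal closed subintervals coming from $G_k$. The limit set $K = \bigcap_k K_k$ then lies automatically in $\cS_\delta$.

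The combinatorial content is that, for indices $k \ge k_0$ where the hypothesis $n_{k+1}/n_k \ge 4/(1-2\delta)$ is in force, each component of $K_k$ contains at least
$$N_k \;\ge\; \left\lfloor (1-2\delta)\, \frac{n_{k+1}}{n_k}\right\rfloor - 1 \;\ge\; \frac{1-2\delta}{2} \cdot \frac{n_{k+1}}{n_k} \;\ge\; 2$$
components of $K_{k+1}$, consecutive children being separated by at least $2\delta/n_{k+1}$. A standard mass distribution estimate applied to the natural probability measure $\mu$ on $K$ (distributing mass uniformly among children at every level) then yields
$$\dim_H \cS_\delta \;\ge\; \dim_H K \;\ge\; \liminf_{k\to\infty} \frac{\log(N_{k_0} N_{k_0+1} \cdots N_{k-1})}{\log(n_k/(1-2\delta))}.$$
Feeding in the lower bound on $N_j$, the numerator is at least $(k-k_0) \log\frac{1-2\delta}{2} + \log(n_k/n_{k_0})$. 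Since $n_k^{1/k} \to \infty$ by hypothesis, $\log n_k / k \to \infty$, the linear-in-$k$ term becomes negligible, and the ratio tends to $1$. The \emph{moreover} clause is then immediate: if $n_{k+1}/n_k \to \infty$, then for every $\delta \in (0,1/2)$ the inequality $n_{k+1}/n_k \ge 4/(1-2\delta)$ eventually holds and the first part applies.

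The hard part will be justifying the mass distribution step rigorously: one must show $\mu(B) \le C r^s$ for every $s$ below the target dimension and every ball $B$ of small radius $r$. For this, I would choose the unique level $k$ such that $(1-2\delta)/n_{k+1} < r \le (1-2\delta)/n_k$, bound the number of level-$(k+1)$ components of $K$ meeting $B$ using the separation $2\delta/n_{k+1}$ between consecutive children, and multiply by the uniform mass $(N_{k_0} \cdots N_k)^{-1}$ carried by each such component. The boundary losses in the definition of $N_k$ and the behaviour of the construction at small indices cause no trouble once the recursion is started at a sufficiently large $k_0$.
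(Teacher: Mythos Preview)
Your proposal is correct and follows essentially the same approach as the paper: both construct the Cantor set $\bigcap_k \{x:\|n_kx\|>\delta\}$, count at least $\tfrac{1-2\delta}{2}\cdot\tfrac{n_{k+1}}{n_k}\ge 2$ children per interval using the growth hypothesis, note the gap $2\delta/n_{k+1}$, and feed these into the mass distribution principle to get a lower bound that tends to $1$ because $\log n_k/k\to\infty$. The only cosmetic difference is that the paper invokes Falconer's Example~4.6 directly for the dimension formula $\liminf_k \frac{\log(m_1\cdots m_{k-1})}{-\log(m_k\varepsilon_k)}$, whereas you propose to verify the mass distribution estimate by hand; the content is the same.
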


\begin{proof}
Let $\delta$ be real with $0 < \delta < 1/2$. 
We consider the Cantor set ${\cS}_\delta := \cap_k E_{k, \delta}$, where 
$$
E_{k, \delta} = [0, 1] \cap \bigcup_{0 \le j \le n_k} \, 
\Bigl[ {j + \delta \over n_k}, {j + 1 - \delta \over n_k} \Bigr].
$$
The length of the intervals composing $E_{k, \delta}$ is equal to $(1-2\delta) n_k^{-1}$. 
The distance between two intervals in $E_{k, \delta}$ is $2\delta n_k^{-1}$. 
An interval composing $E_{k, \delta}$ contains at least $(1-2\delta) n_{k+1} / n_k - 2$ intervals 
composing $E_{k+1, \delta}$. For $k$ large enough, since 
$n_{k+1} \ge 4 n_k / (1-2\delta)$, we see that 
any interval composing $E_{k, \delta}$ contains at least $2$ intervals 
composing $E_{k+1, \delta}$. 
We are in position    
to apply the mass distribution principle.
By Example 4.6 of \cite{Fal90}, we obtain that 
\[
\dim_H {\cS}_\delta \ge \liminf_{k  \to\infty} \frac{ \log (m_1 m_{2} \cdots m_{k-1}) } {- \log ( m_k \ep_k )} ,
\]
where $m_k$ is the smallest number of intervals of $E_{k,\delta}$ in each interval of $E_{k-1,\delta}$,
and $\ep_k \ge  2\delta /n_k$ is the minimal distance between intervals of $E_{k,\delta}$. 
We check for sufficiently large $k$
$$m_k \ge \frac{ (1-2\delta) n_{k}}{n_{k-1}} -2 = \frac{ (1-2\delta) n_{k}}{2n_{k-1}}.$$
Thus we have
$$
\dim_H {\cS}_\delta \ge \liminf_{k\to\infty}  \, {\log n_{k-1} + k \log ( (1-2\delta)/2 )  \over \log n_{k-1}  
- \log (\delta (1 - 2\delta))}.
$$
Since $\lim n_k^{1/k}=\infty,$ 
under the assumption $n_{k+1} \ge 4 n_k / (1-2\delta)$ for $k$ large enough,
we have $\dim {\cS}_{\delta} = 1$.  

If $n_{k+1} / n_k$ tends to infinity, then for any given $\delta$ in $(0, 1/2)$, the 
assumption $n_{k+1} \ge 4 n_k /(1-2\delta)$ is satisfied for $k$ large enough. This shows that 
$\dim {\cS}_{\delta} = 1$.  
\end{proof}

{\it Proof of Theorem 1.2.}  
We apply the second assertion of Theorem~\ref{ET} to prove Theorem~\ref{thm:2s}. 
Let $\alpha$ be an irrational number and $(q_n)_{n \ge 1}$ the sequence of
denominators of its convergents.  Assume that $q_{n+1} / q_n$ 
tends to infinity (equivalently, that $a_n$ tends to infinity). 
Let $k$ be an integer. Let $x$ be in $(0, 1)$ and observe that, for every integer $y$, we have
\begin{equation}\label{eqn:2.1}
\|y x \| \le |y| \cdot \|\alpha k - x \| + |k| \cdot \|\alpha y\|.   
\end{equation}
Let $\delta$ with $0 < \delta < 1/2$. 
Assume that $|k|$ is large and let $\ell$ be the integer with
$$
q_{\ell} \le \frac{2}{\delta}  |k| < q_{\ell + 1}.
$$
Assume that $x$ is in ${\cS}_{\delta}$. 
Letting $y = q_{\ell}$ in \eqref{eqn:2.1}, we have

\begin{equation*}
|q_{\ell}| \cdot \|\alpha k - x \| \ge  \|q_\ell x \| - |k| \cdot \|\alpha q_{\ell}\|   
 \ge \delta - \frac{|k|}{q_{\ell+1}}  = \frac{\delta}{2}. 
\end{equation*}
This gives
$$
|k| \cdot \|\alpha k - x \| \ge {\delta^2 \over 4}.
$$
Since $\delta$ can be chosen arbitrarily close to $1/2$, the theorem is proved. 
\qed

\begin{thm}\label{thm:1.3}
Let $\alpha$ be a real number for which $q_k^{1/k}$ tends to infinity. 
We have $$
\dim_H \Bad^{1/ (2^4 \cdot 3^3)} (\alpha)= 1.
$$
\end{thm}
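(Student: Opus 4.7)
My plan is to set $\delta = 1/3$ (so that $4/(1-2\delta) = 12$), extract a subsequence $(k_j)$ of indices such that $Q_j := q_{k_j}$ grows by at least a factor of $12$ at each step, apply the first assertion of Theorem~\ref{ET} to $(Q_j)$ to obtain a set ${\cS}_{1/3}$ of Hausdorff dimension~$1$, and then check ${\cS}_{1/3} \subseteq \Bad^{1/432}(\alpha)$. The target constant $1/432 = \delta^2(1-2\delta)/16$ is precisely what the choice $\delta = 1/3$ produces. For the subsequence I will take $k_1 = 1$ and $k_{j+1} := \min\{k > k_j : q_k \ge 12\, q_{k_j}\}$, so that $Q_{j+1}/Q_j \ge 12$; since $k_j \ge j$ and $q_k^{1/k} \to \infty$, I get $Q_j^{1/j} \ge q_{k_j}^{1/k_j} \to \infty$, and Theorem~\ref{ET} applies.

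For the containment, I would fix $x \in {\cS}_{1/3}$ and $|k|$ large, then pick $\ell$ with $Q_\ell \le 6|k| < Q_{\ell+1}$. Inequality~(2.1) with $y = Q_\ell$ reads
\[
\tfrac{1}{3} \le \|Q_\ell x\| \le Q_\ell\,\|\alpha k - x\| + |k|\,\|Q_\ell \alpha\|.
\]
If $|k|\|Q_\ell\alpha\| \le 1/6$, then $Q_\ell\|\alpha k - x\| \ge 1/6$, and $Q_\ell \le 6|k|$ immediately yields $|k|\|\alpha k - x\| \ge 1/36$. Otherwise $|k|\|Q_\ell\alpha\| > 1/6$ together with $\|Q_\ell\alpha\| \le 1/q_{k_\ell+1}$ forces $q_{k_\ell+1} < 6|k| < Q_{\ell+1}$, hence $k_{\ell+1} > k_\ell + 1$ and an intermediate convergent $R := q_{k_{\ell+1}-1} \in [Q_\ell, 12 Q_\ell)$ exists. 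Because $R$ is the convergent immediately preceding $q_{k_{\ell+1}} = Q_{\ell+1}$, we have $\|R\alpha\| \le 1/Q_{\ell+1}$ and hence $|k|\|R\alpha\| < 1/6$. Applying (2.1) with $y = R$ --- provided I can ensure $\|Rx\| \ge 1/3$ --- gives $R\|\alpha k - x\| \ge 1/6$, and then $R < 12 Q_\ell \le 72|k|$ yields the desired $|k|\|\alpha k - x\| \ge 1/432$.

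The main obstacle is therefore to impose the extra condition $\|q_{k_{j+1}-1}x\| \ge 1/3$ for all large $j$ while still preserving full Hausdorff dimension. My plan is to intersect ${\cS}_{1/3}$ with the analogous set defined by the sequence $R_j := q_{k_{j+1}-1}$. Since $R_j/Q_j < 12$, Theorem~\ref{ET} does not apply directly to the interleaved sequence $(Q_1, R_1, Q_2, R_2, \ldots)$, so I would revisit the mass-distribution argument behind Theorem~\ref{ET} with a two-scale ``super-level'' bookkeeping: at each super-level $j$ the two constraints at $Q_j$ and $R_j$ each retain a fraction $1 - 2\delta = 1/3$ of the intervals at their scale, while the geometric growth $Q_{j+1}/Q_j \ge 12$ between super-levels still suffices to conclude that the refined Cantor set has full Hausdorff dimension. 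This last step is the most delicate part of the argument.
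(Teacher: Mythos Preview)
Your overall plan is sound up to the point you yourself flag as ``the most delicate part'', but that part is a genuine gap, not just a routine calculation. Concretely: the sequence you need, after your case split, is the interleaved one $Q_1, R_1, Q_2, R_2,\ldots$ (keeping $R_j$ only when $R_j\ne Q_j$), and the ratios between \emph{consecutive} terms of this sequence can be arbitrarily close to~$1$. Indeed $R_j/Q_j$ can be any value in $(1,12)$, and $Q_{j+1}/R_j = q_{k_{j+1}}/q_{k_{j+1}-1}$ is just a ratio of successive convergent denominators, hence may be less than~$2$. Thus the hypothesis $n_{k+1}/n_k \ge 4/(1-2\delta)=12$ of Theorem~\ref{ET} fails, and the mass-distribution estimate underlying that theorem does not survive either: when you pass from an interval at scale $1/(3Q_j)$ to sub-intervals at scale $1/(3R_j)$ with $R_j/Q_j$ small, you may retain \emph{no} full sub-interval. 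A ``two-scale super-level'' bookkeeping does not obviously fix this without sacrificing the value $\delta=1/3$, and hence without losing the target constant $1/432$.

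The paper avoids the case split altogether by being cleverer about the subsequence. Instead of asking only for $q_{\varphi(i)}\ge R\,q_{\varphi(i-1)}$, it extracts $\varphi$ so that in addition
\[
q_{\varphi(i-1)+1}\ \ge\ q_{\varphi(i)}/R,
\]
see \eqref{eqn:2.3}. This second inequality immediately gives
$\|q_{\varphi(\ell)}\alpha\|\le 1/q_{\varphi(\ell)+1}\le R/q_{\varphi(\ell+1)}$,
so for $q_{\varphi(\ell)}\le M|k|<q_{\varphi(\ell+1)}$ one has $|k|\,\|q_{\varphi(\ell)}\alpha\|<R/M$ \emph{unconditionally}; no ``Case~2'' ever arises. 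Plugging into \eqref{eqn:2.1} yields $|k|\,\|\alpha k-x\|\ge(\delta-R/M)/M$, and the choice $\delta=1/3$, $R=12$, $M=72$ gives $1/432$ directly. The construction of such a $\varphi$ is elementary: one marks the ``big-jump'' indices $\mathcal J_0=\{j:q_{j+1}\ge Rq_j\}$, puts all of them in the range of $\varphi$, and between two consecutive elements of $\mathcal J_0$ fills in greedily from the top so that both inequalities in \eqref{eqn:2.3} hold. Your naive subsequence ($k_{j+1}$ the first index with $q_k\ge 12q_{k_j}$) satisfies the first inequality but not the second, which is precisely why you were forced into the awkward case split.
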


\begin{proof} 
We first claim that for each $R > 1$    
there exists an increasing function
$\varphi : \bZ_{\ge 1} \to \bZ_{\ge 1}$ satisfying $\varphi (1) \geq 1$
and, for any integer $i \ge 2$,
\begin{equation}\label{eqn:2.3}
q_{\varphi(i)} \ge R q_{\varphi (i-1)} \quad
{\rm and} \quad q_{\varphi(i-1) + 1} \ge  q_{\varphi (i)} / R.
\end{equation}
The function $\varphi$ is constructed in the following way.
Let 
$$\cJ_0=\{ j : 
q_{j+1} \ge R q_{j} \},
$$
which is an infinite set since $q_k^{1/k}$ tends to infinity. 
Let $\varphi(1)$ be the smallest element of $\cJ_0$.    
Suppose that we have defined $\varphi$ up to $\varphi(h)$ in $\cJ_0$.
We will define $\varphi(h+1), \cdots, \varphi(h')$ for some $h'$ which we will determine shortly. 
Define $\varphi(h')$ to be the smallest element of $\cJ_0$   
greater than $\varphi(h)$.    
 
Define
$\varphi(h'-1)$ to be the largest index $t > \varphi(h)$   
for which $q_{\varphi(h')} \ge R \, q_t$. We let
$\varphi(h'-2)$ be the largest index $t > \varphi(h)$    
for which $q_{\varphi(h'-1)} \ge R \, q_t$, and so on until it does not
exist any index $t$ as above.
Let us say that we have just defined  $\varphi(h'), \varphi(h'-1), \ldots,
\varphi(h'-h_0)$.
Define $h' = h_0 + h + 1$. It is easy to check that the
inequalities \eqref{eqn:2.3} are satisfied for $i= h+1, \ldots , h_0 + h+ 1$. The claim follows.

For a given $0< \delta <1/2$, let $R = \frac{4}{1-2\delta}$.
As $\varphi$ is increasing, $q_{\varphi(k)}^{1/k}$ tends to infinity. Thus, we can apply 
Theorem~\ref{ET} to the sequence $n_k = q_{\varphi(k)}$ 
to obtain a set ${\mathcal S}_{\delta}$. For $x$ in $S_\delta$, we argue as in the proof 
of Theorem~\ref{thm:2s}. 

Let $k$ be an integer.  Fix a positive real number $M$    
and let $\ell$ be the integer with 
$$
q_{\varphi(\ell)} \le M |k| < q_{\varphi(\ell + 1)}.
$$
Note that, by \eqref{eqn:2.3}, we have 
$$
\|\alpha q_{\varphi(\ell)}\|  \le q_{\varphi(\ell)+1}^{-1} 
\leq R q_{\varphi(\ell+1)}^{-1}.
$$
Thus by \eqref{eqn:2.1}, if $x$ is in $\cS_\delta$, then
\begin{align*}
|q_{\varphi(\ell)}| \cdot \|\alpha k - x \| 
& \ge  \|q_{\varphi(\ell)} x \| - |k| \cdot \|\alpha q_{\varphi(\ell)}\|   \\
& > \delta - R \cdot |k| \cdot q_{\varphi(\ell+1)}^{-1}  
 \ge \delta - \frac{R}{M}  
\end{align*}
This gives
$$
|k| \cdot \|\alpha k - x \| 
\ge \frac{1}{M} q_{\varphi(\ell)} ||\alpha k -x||
\ge \left( \delta - \frac{R}{M}\right) \frac{1}{M},
$$
which attains the minimum $(2^4 \cdot 3^3)^{-1}$ at $\delta=1/3$ and $M=72$. This completes the proof.
\end{proof}

\subsection{Non-singular on average}

In this subsection, we show the implication (i) $\Longrightarrow$ (ii) of Theorem~\ref{thm:general}. 
Let us assume that $C_0 := \liminf_k  \log q_k /k < \infty$ and show that 
$\dim_H \Bad^\ep (\alpha) <1.$
Since $\Bad^\ep(\alpha) =\underset{K \to \infty}{ \lim} \Bad^\ep_K (\alpha)$, where
$$\Bad^\ep_K (\alpha) :=\{x\in [0,1]\, : \, |q| \cdot \| q\alpha -x \| \geq \epsilon, \;  \forall q \geq q_K\}$$
is an increasing sequence of sets, it is enough to show that $\dim_H \Bad^\ep_K(\alpha)$ 
has a uniform upper bound smaller than 1.

Throughout the paper, for a positive integer $n$, we view $n \alpha$ as a point 
on the circle $S^1 = \mathbb R / \mathbb Z$ represented by an element in $[0,1)$. 
For convenience, given $a,b$ in $S^1,$ we denote by $(a,b)$ 
the shorter interval among $(a,b)$ and $(b,a)$ in $S^1$ regardless of whether $a<b$ or not. 
For any $k$ in $\mathbb{Z}_{\geq 0}$, define ${\cA}^{(k)}$ to be the set of connected components 
of $S^1 \setminus \{ \alpha, 2\alpha, \ldots, q_k \alpha \}$. We will call $\cA^{(k)}$ a partition of $S^1$ 
by ignoring the endpoints of the intervals. 
This partition consists of $q_k$ intervals, which we call $I_n^{(k)}$, of the following two types :
\begin{enumerate}
\item when $n< q_k-q_{k-1}$ : $I_n^{(k)}= (n\alpha, (n+q_{k-1}) \alpha)$ and its length is $\| q_{k-1} \alpha \|$.
\item when $n> q_k-q_{k-1}$ : $I_n^{(k)}= (n\alpha, (n+q_{k-1}-q_k) \alpha)$ 
and its length is $\| q_{k-1} \alpha \|+ \| q_k \alpha \|$.
\end{enumerate}
Their lengths are bounded as follows: 
\begin{equation}\label{eqn:bound}
\frac{1}{2q_k} < \| q_{k-1} \alpha \| < \frac 1{q_k} \quad \hbox{and} \quad 
\frac 1{q_k} < \| q_{k-1} \alpha \| + \| q_k \alpha \| < \frac {2}{q_k}.
\end{equation} 
Let us denote by $|n|_q$ the integer in $[1, q]$ which is congruent to $n$ modulo $q$.  
With this notation, the elements of ${\cA}^{(k)}$ are 
the intervals $I_n^{(k)} := [ n \alpha,  |n+q_{k-1}|_{q_k} \alpha ]$ with $n=1,2, \ldots, q_k$.

The partition ${\cA}^{(k+1)}$ is clearly a refinement of the partition ${\cA}^{(k)}$. 
Every element of ${\cA}^{(k)}$ is divided into either $a_{k+1}$ or $a_{k+1}+1$ elements of ${\cA}^{(k+1)}$. 
In particular, let $m$ be an integer with $q_k < m \leq q_{k+1}$,
then $m\alpha$ is in $I_n^{(k)}$ if and only if 
\begin{equation}\label{eqn:2.4}
m = n + q_{k-1} + cq_k.
\end{equation}
for some integer $0 \le c \le a_{k+1} -1.$ See Figure~\ref{fig1}. 

\begin{figure}
\begin{center}
\begin{tikzpicture}[every loop/.style={}]
  \tikzstyle{every node}=[inner sep=-1pt]
  \node (-1) at (-1,0) {$[$} ;
  \node (0) at (0,0) {$|$} node [below=15pt] at (-1,0)  {$(n+q_{k-1})\alpha$};
  \node (1) at (1,0) {$|$} node [above=15pt] at (1,0) {$(n+q_k+q_{k-1})\alpha$};
  \node (2) at (2,0) {$|$};
  \node (3) at (3,0) {$|$};
  \node (5) at (5,0) {$|$} node [below=15pt] at (5,0) {$m \alpha $};
  \node (6) at (5.5,0) {$\bullet$} node [below=6pt] at (5.5,0) {$x$};  
  \node (7) at (6,0) {$|$} node [above=15pt] at (6,0) {$ (m + q_k) \alpha$};
  \node (10) at (10,0) {};
  \node (11) at (11,0) {$]$} node [below=15pt] at (11) {$n\alpha$};
  \path[-] (-1) edge (11);
  \draw [line width=.5mm,   black] (5) -- (7);
\end{tikzpicture}
\end{center}
\caption{When $k$ is even, $x$ is in $I^{(k)}_{n} \cap I^{(k+1)}_{m}$, $m = b q_k + q_{k-1} +n$.} 
\label{fig1}
\end{figure}
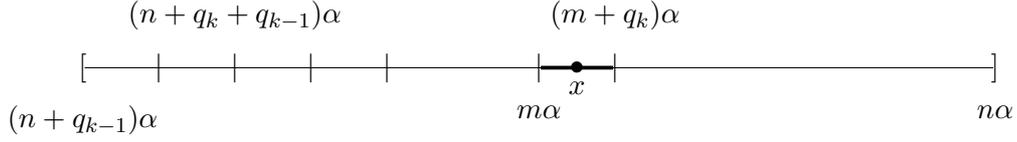

We define a sequence $(k_i)_{i \ge 0}$ 
in the following way: set $k_0=K$ and, for $i \ge 0$, let $k_{i+1}$ be the smallest integer for which 
$q_{k_i}/q_{k_{i+1}} < \ep/12$.   
Observe that since $q_{k+2}>2q_k$, the sequence $(k_{i+1}-k_i)_{i \ge 0}$ 
is uniformly bounded from above by a contant which we denote by $C_\ep$. Hence we obtain 
\begin{equation} \label{eqn:liminf}
\liminf_{i\to\infty} \frac 1i \log q_{k_i} < C_0 C_\ep . 
\end{equation}

Note that
\begin{align*}
\Bad^\ep_{K}(\alpha) &= \bigcap_{i=0}^\infty \underset{q_{k_i} \leq n < q_{k_{i+1}}}{\bigcap}  
B\Bigl(n\alpha, \frac{\ep}{n} \Bigr)^c  
 \subset   \bigcap_{i=0}^\infty \underset{q_{k_i} \leq n < \frac{\ep}{2} q_{k_{i+1}}}{\bigcap} 
 B \Bigl(n\alpha, \frac{\ep}{n} \Bigr)^c  \\
& \subset   \bigcap_{i=0}^\infty \underset{q_{k_i} \leq n < \frac{\ep}{2} q_{k_{i+1}}}{\bigcap} \left( I_n^{k_{i+1}} \right)^c. 
\end{align*}
The last inclusion above follows from the fact that each interval $I_n^{(k_{i+1})}$ 
is contained in $B(n\alpha, \frac{\ep}{n})$ since it has one endpoint $n\alpha$ 
and is of length at most $2/q_{k_{i+1}} < \ep/n.$

Thus, by letting $F_K=\bigcap_{i \ge 0} F_K^{(i)}$, where 
$$
\cS_j = \underset{q_{k_j} \leq n      
< \frac{\ep}{2} q_{k_{j+1}}}{\bigcap} \left( I_n^{(k_{j+1})} \right)^c     
\qquad \textrm{and} \qquad 
F_K^{(i)} 
= \underset{0 \leq j \leq i}{\bigcap}  
\cS_j,     
$$
it is enough to find a uniform upper bound for $\dim_H  F_K.$

We need the following lemma to estimate the number of subintervals 
of $I_{m}^{(k_{i+1})}$ in each $I_{n}^{(k_i)}$.

\begin{lem}\label{lem_subintervals}
Let $Q \ge 6q_k$. 
For each interval $I_n^{(k)}$ of $\cA^{(k)}$, the number of points $m\alpha$ 
which belong to $I_n^{(k)}$ for $q_k < m \le Q$ is 
at least equal to $Q / 4q_k$. 
\end{lem}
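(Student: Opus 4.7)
The plan is to split the argument into two regimes depending on the size of $Q$ relative to $q_{k+1}$, since the structure of the orbit $\{m\alpha\}_{m \le Q}$ inside $I_n^{(k)}$ is governed by the refinement $\cA^{(k)} \supset \cA^{(k+1)} \supset \cdots$ and changes behavior once $Q$ passes each convergent denominator.

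First I would handle the simpler regime $q_k < Q \le q_{k+1}$. By the characterization \eqref{eqn:2.4} recalled just above the lemma, $m\alpha \in I_n^{(k)}$ for $q_k < m \le q_{k+1}$ happens exactly when $m = n + q_{k-1} + cq_k$ for an appropriate non-negative integer $c$. Counting the integers $c$ for which $q_k < m \le Q$ gives at least $\lfloor (Q - q_k - q_{k-1})/q_k \rfloor \ge Q/q_k - 3$ such $m$'s; then the hypothesis $Q \ge 6 q_k$ immediately yields $Q/q_k - 3 \ge Q/(4q_k)$, closing this case.

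For the second regime $Q > q_{k+1}$, let $s \ge k+1$ be the largest integer with $q_s \le Q$. Since $\cA^{(s)}$ refines $\cA^{(k)}$, the interval $I_n^{(k)}$ is subdivided into $\cA^{(s)}$-subintervals, each of length at most $\|q_{s-1}\alpha\| + \|q_s\alpha\| < 2/q_s$. Using the length bound $|I_n^{(k)}| \ge \|q_{k-1}\alpha\| > 1/(2q_k)$ from \eqref{eqn:bound}, the number of such subintervals is at least $q_s/(4q_k)$, so the points $m\alpha$ with $q_k < m \le q_s$ that land in $I_n^{(k)}$ number at least $q_s/(4q_k) - 1$. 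To account for the remaining points $m \in (q_s, Q]$, I would reapply the refinement description one level deeper: each such $m\alpha$ lands in a specific $\cA^{(s)}$-interval via $m = n' + q_{s-1} + cq_s$, and those whose $\cA^{(s)}$-interval is contained in $I_n^{(k)}$ contribute further, giving an additional term roughly proportional to $(Q-q_s)/q_k$.

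The hard part is in this second regime when $Q$ is only slightly larger than $q_s$: the contribution from $(q_k, q_s]$ alone scales like $q_s/(4q_k)$, which may be smaller than the target $Q/(4q_k)$, so the partial refinement in $(q_s, Q]$ must be lower-bounded carefully. The refinement order matters here --- the first $q_{s-1}$ new points in $(q_s, q_{s+1}]$ refine only the long (Type 2) $\cA^{(s)}$-intervals, and only the later rounds $c \ge 1$ touch every $\cA^{(s)}$-interval --- so one has to track how many of the intermediate new points fall into subintervals of $I_n^{(k)}$, essentially by applying the Case 1 argument at level $s$. The slack in the assumption $Q \ge 6q_k$ is exactly what allows one to absorb the additive $O(1)$ losses from combining the contributions of the two levels.
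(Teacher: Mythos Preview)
Your two--regime decomposition is not the paper's approach, and the second regime is where your argument has a genuine gap. The paper's proof avoids any case split on the size of $Q$ relative to $q_{k+1}$: it simply observes that for \emph{every} window of $2q_k$ consecutive integers $\{(2i-1)q_k+1,\dots,(2i+1)q_k\}$, the corresponding orbit points have pairwise gaps at most $\|q_{k-1}\alpha\|\le |I_n^{(k)}|$, so each such window contributes at least one point to $I_n^{(k)}$. Packing $c\ge (Q-3q_k)/(2q_k)$ disjoint windows into $(q_k,Q]$ and using $Q\ge 6q_k$ gives $c\ge Q/(4q_k)$ in one line. No refinement to level $s$, no recursion, no accumulation of additive errors.

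In your Case~2 the bookkeeping does not close. From the level-$s$ refinement you get at least $q_s/(4q_k)-1$ points in $(q_k,q_s]$; from the level-$s$ parametrization applied inside each of the $N\ge q_s/(4q_k)$ subintervals you get, per subinterval, at least $\max(0,\,Q/q_s-2)$ further points in $(q_s,Q]$. Summing gives at best $Q/(4q_k)-q_s/(4q_k)-1$, which is strictly less than the target $Q/(4q_k)$. The ``slack'' in the hypothesis $Q\ge 6q_k$ is multiplicative in $Q/q_k$, whereas your shortfall is an additive term of size $q_s/(4q_k)+1$ that can itself be large (since $q_s\ge q_{k+1}$); there is no reason these cancel. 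Concretely, take $Q=q_s$ with $q_s$ just above $6q_k$: you need at least $\lceil q_s/(4q_k)\rceil$ points but your bound gives only $\lceil q_s/(4q_k)\rceil-1$.

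Your Case~1 is fine, but the parametrization \eqref{eqn:2.4} that drives it is only valid for $q_k<m\le q_{k+1}$, and this is exactly why pushing to level $s$ and then reapplying Case~1 at that level loses constants you cannot recover. The right repair is to replace the level-dependent parametrization by the uniform three-gap fact for $2q_k$ consecutive orbit points, which is the paper's one-step argument.
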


\begin{proof} Let $c$ be the positive integer defined by the inequalities 
$$
q_k + c (2q_k) \le Q  < q _k + (c+1) (2q_k).
$$
Each interval $I_n^{(k)}$ contains at least one point $m\alpha$ with  
$$ 
(2i-1) q_k + 1 \le m \le (2i+ 1)q_k, \qquad  1 \le i \le c, 
$$
since the length of interval $I_n^{(k)}$ is at least $\| q_{k-1} \alpha\|$ 
and the distance between   
two points of any neighboring point of 
$m\alpha$ with $ (2i-1) q_k + 1 \le m \le (2i+ 1)q_k$ is at most $\| q_{k-1} \alpha\|$.

Hence, the number of points $m\alpha$ contained in $I_n^{(k)}$ with $q_k + 1 \le m \le (2c+1)q_k \le Q$ 
is at least $c$. 
Using the assumption $Q \ge 6q_k$, we have
$$c > \frac{Q - 3q_k}{2q_k} \ge \frac{Q}{4q_k}. $$
\end{proof}
By Lemma~\ref{lem_subintervals}, for 
each interval $I_{n}^{(k_i)}$ in $\cS_i$, the number of intervals
$I_m^{(k_{i+1})}$ in $I_{n}^{(k_i)}$ 
which contain a point $m\alpha$ with $q_{k_i} \le m \leq \ep q_{k_{i+1}} /2 $ 
is at least $\ep q_{k_{i+1}}/ (16 q_{k_i})$, since at most two points belong to one interval of $F_K^{(i+1)}$.

Since the total number of intervals $I_m^{(k_{i+1})}$ 
contained in an interval of $I_{n}^{(k_i)}$ in $\cA^{(k_i)}$   
is at most 
$$
\frac{\| q_{k_i-1} \alpha\| +\| q_{k_i} \alpha\|}{\| q_{k_{i+1}-1} \alpha\| } < \frac{4 q_{k_{i+1}}}{q_{k_i}},
$$
for each interval $I_{n}^{(k_i)}$ of $\cA^{(k_i)}$,  we have 
$$ \frac{\# \left\{  I_m^{(k_{i+1})} \notin \cS_{i+1}  :  I_m^{(k_{i+1})}  \subset  I_{n}^{(k_i)}  \right \} }{\# \left\{  I_m^{(k_{i+1})} \in \cA^{(k_{i+1})}  :  I_m^{(k_{i+1})}  \subset  I_{n}^{(k_i)}  \right \} }  > \frac{ \ep q_{k_{i+1}}/ (16 q_{k_i}) } {4 q_{k_{i+1}}/q_{k_i}}$$    
thus  
$$   
\frac{\# \left\{  I_m^{(k_{i+1})} \in \cS_{i+1}  :  I_m^{(k_{i+1})}  \subset  I_{n}^{(k_i)}  \right \} }{\# \left\{  I_m^{(k_{i+1})} \in \cA^{(k_{i+1})}  :  I_m^{(k_{i+1})}  \subset  I_{n}^{(k_i)}  \right \} }
< 1- \frac{ \ep q_{k_{i+1}}/ (16 q_{k_i}) } {4 q_{k_{i+1}}/q_{k_i}} =1- \frac{\ep}{32}.
$$
Since the cardinality of $\cA^{(k_i)}$ is $q_{k_i}$, the number of intervals $I_{n}^{(k_i)}$ in $\cS_i$ is at most
$$
q_{k_i} \left( 1 - \frac{\ep}{32} \right)^i.
$$
Thus, for any $s \leq 1$, we have
\[
\sum_{I_n^{(k_i)}\in \cS_i}  \left| I_n^{(k_i)} \right|^s  < q_{k_i}   
\left( 1 - \frac{\ep}{32} \right)^i \cdot \left(\frac {2} {q_{k_i}}\right)^s.
\]
By \eqref{eqn:liminf}, for any $M> C_0 C_\ep$, there exists a sequence $k_i$ tending to infinity  
for which $q_{k_i}\leq M^i$.  
Since 
$
\underset{k_i : q_{k_i}\leq M^i}{\bigcap} F_K^{(i)}
$
is a covering of $F_K$, we obtain 
\[
\dim_H F_K \leq 1 + \frac {\log (1-\ep/32)}{\log  M}.
\]
This completes the proof of the lemma.

\section{Approximation of one-sided limit}

In this section, we assume that the sequence $(a_k)_{k \ge 1}$ of partial quotients 
of $\alpha$ tends to infinity and prove Theorem \ref{thm:14}.
Recall that the partition ${\cA}^{(k)}$   
of $S^1$ consists of the intervals 
 $I_n^{(k)} := [ n \alpha,  |n+q_{k-1}|_{q_k} \alpha ]$ with $n=1,2, \ldots, q_k$.

Recall also that the numbers $1 \leq m \leq q_{k+1}$ for which the corresponding 
point $m \alpha$ is contained in $I_n^{(k)}$ are (in order, looking from 
$|n+q_{k-1}|_{q_k} \alpha$ towards $n \alpha$) $n+q_{k-1}+q_k$, $n+q_{k-1}+2q_k$, etc.

If $I_{n}^{(k)} \supset I_{m}^{(k+1)}$ with $1\le n \le q_{k}$, $1\le m \le q_{k+1}$, then we have
\begin{equation}\label{eqn:range}. 
m = n+   q_{k-1} + b q_k , 
\end{equation}
for some integer $b$ with $-1 \le b \le a_{k+1}-1 $. 
Thus two endpoints $m\alpha$, $n\alpha$ of $I_{m}^{(k+1)}, I_{n}^{(k)}$ are separated by  
\begin{equation}\label{e1}
\| m\alpha - n\alpha \| = \| (bq_k + q_{k-1}) \alpha \| = \| q_{k-1} \alpha \| - b\| q_k \alpha \|.
\end{equation}

To prove the first part of Theorem \ref{thm:14}, let us fix $\ep<1/4$. 
Choose $K$ large enough so that $a_k$ is large for every $k \ge K$  
and select some sequence $(\gamma_k)_{k \ge 1}$ which tends to $0$ as $k$ tends to infinity.  
We will later specify the conditions satisfied by $\gamma_k$.

Throughout this section, set $\delta_k := \frac{q_{k-1}}{q_{k}}$. Since 

$$ q_k \| q_{k} \alpha \| + q_k \| q_{k-1} \alpha \| = 1 + (q_k - q_{k-1} ) \| q_k \alpha \|  < 1 + \frac{q_k q_{k+1}\| 
q_k \alpha \| } {q_{k+1}} < 1 + \delta _{k+1}$$
and
$$ q_k \| q_{k-1} \alpha \| = 1 - q_{k-1} \| q_k \alpha \| > 1 - \frac{q_{k-1}}{q_k} \frac{q_k}{q_{k+1}}  q_{k+1}\| q_k 
\alpha \|  > 1 - \delta_{k} \delta_{k+1},$$
we get 
$$
\| q_{k} \alpha \| + \| q_{k-1} \alpha \|  < \frac{1 + \delta_{k+1}}{q_k}, \qquad \| q_{k-1} \alpha \|  
> \frac{1-\delta_{k} \delta_{k+1}}{q_k}.
$$

\begin{lem} \label{lem:induc}
Let $K$ be a natural number such that, for $k \ge K$, we have 
\[
\gamma_k + 2\delta_k < 1-2\ep^{1/2}.
\]
Let $k \ge K$. Then for every $n_1, n_2$ with
\begin{equation}\label{eqn:3.3}
\ep^{1/2}q_k + q_{k-1} < n_1 < (\ep^{1/2}+\gamma_k)q_k + q_{k-1}
\end{equation}
and 
\begin{equation}\label{eqn:3.4}
\ep^{1/2}q_{k+1} + q_k < n_2 < (\ep^{1/2}+\gamma_{k+1})q_{k+1} + q_k,
\end{equation}
if $I_{n_2}^{(k+1)} \subset I_{n_1}^{(k)}$, then $I_{n_2}^{(k+1)}$ 
is disjoint from    
all the balls $B(n\alpha, \ep/n)$ such that
\begin{itemize}
\item[(a)] $n \alpha$ is an endpoint of $I_{n_1}^{(k)}$, or
\item[(b)] $q_k< n \leq q_{k+1}$ and $n \alpha$ is not an endpoint of $I_{n_2}^{(k+1)}$.
\end{itemize}
\end{lem}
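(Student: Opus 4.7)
The plan is to translate the containment $I_{n_2}^{(k+1)} \subset I_{n_1}^{(k)}$ into an explicit integer parameter and then carry out direct distance computations via \eqref{e1}. By \eqref{eqn:range} I will write $n_2 = n_1 + q_{k-1} + b_2 q_k$ for an integer $b_2$, and combining this with \eqref{eqn:3.3} and \eqref{eqn:3.4} and the notation $\delta_k = q_{k-1}/q_k$ gives the two-sided bound
\[
\epsilon^{1/2}q_{k+1}+(1-\epsilon^{1/2}-\gamma_k-2\delta_k)q_k < b_2 q_k < (\epsilon^{1/2}+\gamma_{k+1})q_{k+1}+(1-\epsilon^{1/2}-2\delta_k)q_k.
\]
The hypothesis $\gamma_k+2\delta_k<1-2\epsilon^{1/2}$ forces $1-\epsilon^{1/2}-\gamma_k-2\delta_k>\epsilon^{1/2}$, so $b_2 q_k$ exceeds $\epsilon^{1/2}q_{k+1}$ by a linear-in-$q_k$ margin. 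Together with $a_k\to\infty$, the analogous upper bound gives $b_2+1\le a_{k+1}-1$ for $k$ large, so the endpoints $n_2\alpha$ and $(n_2+q_k)\alpha$ of $I_{n_2}^{(k+1)}$ lie strictly inside $I_{n_1}^{(k)}$.

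Next I will apply \eqref{e1}: the points $(n_1+q_{k-1}+bq_k)\alpha$ for $0\le b\le a_{k+1}$ lie along $I_{n_1}^{(k)}$ at distance $\|q_{k-1}\alpha\|-b\|q_k\alpha\|$ from $n_1\alpha$, placing $n_2\alpha$ at level $b_2$ and $(n_2+q_k)\alpha$ at level $b_2+1$. For case (a), the distances to the endpoints of $I_{n_1}^{(k)}$ are
\[
\dist(I_{n_2}^{(k+1)},(n_1+q_{k-1})\alpha)=b_2\|q_k\alpha\|, \qquad
\dist(I_{n_2}^{(k+1)},n_1\alpha)=\|q_{k-1}\alpha\|-(b_2+1)\|q_k\alpha\|,
\]
the latter simplifying via $\|q_{k-1}\alpha\|=a_{k+1}\|q_k\alpha\|+\|q_{k+1}\alpha\|$. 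To prove $b_2\|q_k\alpha\|>\epsilon/(n_1+q_{k-1})$, I will substitute the lower bound on $b_2 q_k$, the inequality $q_{k+1}\|q_k\alpha\|>1-\delta_{k+1}\delta_{k+2}$, and $n_1+q_{k-1}>\epsilon^{1/2}q_k+2q_{k-1}$, reducing to a sum of manifestly positive terms under the hypothesis. The symmetric inequality $\|q_{k-1}\alpha\|-(b_2+1)\|q_k\alpha\|>\epsilon/n_1$ uses the upper bound on $b_2$ together with $\|q_{k-1}\alpha\|>(1-\delta_k\delta_{k+1})/q_k$, eventually reducing to the positivity of $\epsilon^{1/2}+\delta_k(2-2\epsilon^{1/2}-2\delta_{k+1}-\gamma_{k+1})$, which is immediate from the hypothesis at step $k+1$.

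For case (b), the points $m\alpha$ with $q_k<m\le q_{k+1}$ lying in $I_{n_1}^{(k)}$ are exactly $(n_1+q_{k-1}+bq_k)\alpha$ for $b\in\{1,\dots,a_{k+1}-1\}$; for $b\notin\{b_2,b_2+1\}$ the distance from $m\alpha$ to $I_{n_2}^{(k+1)}$ equals $(b_2-b)\|q_k\alpha\|$ when $b<b_2$ and $(b-b_2-1)\|q_k\alpha\|$ when $b>b_2+1$, the minimum $\|q_k\alpha\|$ being attained at $b=b_2-1$ and $b=b_2+2$. At these extremes, $m\ge n_2-q_k>\epsilon^{1/2}q_{k+1}$, so
\[
m\|q_k\alpha\|>\epsilon^{1/2}(1-\delta_{k+1}\delta_{k+2})>\epsilon,
\]
using $\epsilon^{1/2}<1/2$ and $\delta_{k+1}\delta_{k+2}<1/4$ (both consequences of the hypothesis). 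For other values of $b$ the factor $|b-b_2|$ grows linearly while $m$ stays of order at least $q_k$, and the estimate only improves.

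\textbf{Main obstacle.} The estimates in case (a) are tight at order $\epsilon^{1/2}/q_k$, so the argument requires careful bookkeeping of correction terms of size $\delta_k$, $\delta_{k+1}\delta_{k+2}$, etc.\ to preserve strict inequality. The hypothesis $\gamma_k+2\delta_k<1-2\epsilon^{1/2}$ is precisely what provides the linear-in-$q_k$ slack in the lower bound for $b_2 q_k$; this slack survives multiplication by $\|q_k\alpha\|\sim 1/q_{k+1}$ and is what converts borderline inequalities into genuine ones.
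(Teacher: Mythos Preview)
Your approach is essentially the paper's own: write $n_2 = n_1 + q_{k-1} + b_2 q_k$, bound $b_2$ from the hypotheses, and compute distances via \eqref{e1}. Two points need attention, however.

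\medskip

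\textbf{A genuine omission in case (b).} You restrict attention to the points $m\alpha$ with $q_k < m \le q_{k+1}$ that lie \emph{inside} $I_{n_1}^{(k)}$. But the statement concerns all such $m$, and for roughly a $(1-\epsilon^{1/2})$ fraction of them $m\alpha$ lies outside $I_{n_1}^{(k)}$. This subcase is easy: for such $m$, the distance from $m\alpha$ to $I_{n_2}^{(k+1)}$ is at least the minimum of the two quantities $b_2\|q_k\alpha\|$ and $\|q_{k-1}\alpha\|-(b_2+1)\|q_k\alpha\|$ computed in case (a), both of which you show exceed $\epsilon^{1/2}/q_k > \epsilon/q_k > \epsilon/m$. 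The paper handles this explicitly as its subcase (ii); you need one sentence to close the gap.

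\medskip

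\textbf{The hand-wave ``the estimate only improves''.} For $b = b_2 + d + 1$ with $d\ge 1$ this is fine: both $m$ and the distance increase. For $b = b_2 - d$ with $d\ge 1$, however, $m = n_2 - d q_k$ \emph{decreases} while the distance $d\|q_k\alpha\|$ increases, so monotonicity of the product is not clear. What is true is that the product $m \cdot d\|q_k\alpha\| = d(n_2 - d q_k)\|q_k\alpha\|$ is a downward parabola in $d$ on the range $1 \le d < n_2/q_k - 1$ forced by $m>q_k$, hence attains its minimum at the endpoints, where it equals $(n_2 - q_k)\|q_k\alpha\|$; this is exactly the value you already verified exceeds $\epsilon$. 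The paper makes this quadratic argument explicit, and you should too, since ``$m$ stays of order at least $q_k$'' alone would only give $m\cdot d\|q_k\alpha\| \gtrsim d\,\delta_{k+1}$, which is not enough.
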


\begin{proof}
As $(n_1 + q_{k-1})\alpha$ and $(n_2 + q_{k})\alpha$ 
are endpoints of $I_{n_1}^{(k)}$ and $I_{n_2}^{(k+1)}$, for part (a), we need to check the inequalities
$$
\frac{\ep}{n_1} <  \| (n_2 + q_k)\alpha - n_1 \alpha  \| 
\;\;\; \mathrm{and}\;\;\;\;
\frac{\ep}{n_1 + q_{k-1}} <  \|n_2 \alpha  - (n_1+q_{k-1}) \alpha \|.
$$
Letting $n_2 = bq_k + q_{k-1} + n_1$ as in \eqref{eqn:range}, we have  
$$
\| n_2 \alpha - n_1 \alpha \| = \| (bq_k + q_{k-1}) \alpha \| = \| q_{k-1} \alpha \| - b \| q_k \alpha\|.
$$
Therefore, we have
\begin{equation}\label{3.3}
\begin{split}
\| (n_2 + q_k)\alpha - n_1 \alpha \| &=  \| q_{k-1} \alpha \| - (b+1) \| q_k \alpha\|  \\
&= \frac{1 - q_{k-1}\| q_{k} \alpha \|}{q_k} - \left(\frac{n_2 - q_{k-1} - n_1 + q_k}{q_k} \right) \| q_k \alpha\| \\
&> \frac{1}{q_k} - \left(\frac{n_2 + \delta_{k+1}q_{k+1}}{q_k}\right) \| q_k \alpha\| 
> \frac{1 - \ep^{1/2} - \gamma_{k+1} - 2\delta_{k+1}}{q_k}
> \frac{\ep^{1/2}}{q_k},
\end{split}
\end{equation}
where the last inequality follows from \eqref{eqn:3.3}. We also have
\begin{equation}\label{3.4}
\begin{split}
|n_2 \alpha  - (n_1+q_{k-1}) \alpha | &= b \| q_k \alpha\| = \frac{n_2 - q_{k-1} - n_1}{q_k} \| q_k \alpha\| \\
&> \left( \frac{\ep^{1/2}q_{k+1} + (1 - \ep^{1/2} -\gamma_{k}) q_{k} - 2q_{k-1} }{q_k} \right) 
\left( \frac{1 - \delta_{k+1} \delta_{k+2}}{q_{k+1}} \right) \\
&> \frac{\ep^{1/2} \left ( 1 + \delta_{k+1} \right) (1 - \delta_{k+1} \delta_{k+2}) }{q_k}  >  \frac{\ep^{1/2}}{q_k}.
\end{split}
\end{equation}
Therefore, we have 
$$
\frac{\ep}{n_1} <  \frac{\ep}{\ep^{1/2}q_{k} + q_{k-1}} < 
\frac{\ep^{1/2}}{q_k} < | (n_2 + q_k)\alpha - n_1 \alpha |$$
and
$$
\frac{\ep}{n_1 + q_{k-1}} < \frac{\ep}{\ep^{1/2}q_{k} + 2q_{k-1}}
 < \frac{\ep^{1/2}}{q_k} < |n_2 \alpha  - (n_1+q_{k-1}) \alpha |.
$$

\medskip

For part (b), we separate two cases: 

(i) If $n\alpha$ is in $I_{n_1}^{(k)}$ and $n \alpha$ is not an endpoint of $I_{n_2}^{(k+1)}$, 
then  $n = n_2 - dq_k$ or $n = n_2 + (d+1) q_k$ for some $d \ge 1$.

Suppose that $n = n_2 - dq_k$. Then, by the condition $n > q_k$, we have 
$$1 \le d < \frac{n_2}{q_k} -1.$$  
Thus, combined with \eqref{eqn:3.4}, we deduce that 
$$ d \left( \frac{n_2}{q_k} - d \right) \ge \left( \frac{n_2}{q_k} - 1 \right)
> \frac{\ep^{1/2}q_{k+1}}{q_k} > \frac{2 \ep q_{k+1}}{q_k} > \frac{\ep}{q_k \| q_k \alpha \|}.
$$
Hence, for $n = n_2 - d q_k$, we have 
$$
\frac{\eps}{n} = \frac{\ep}{n_2 - d q_k} <  d \| q_k \alpha \| = \left| n\alpha - n_2\alpha \right|.
$$
When $n = n_2 + dq_k$, we get 
$$
\frac{\eps}{n} = \frac{\ep}{n_2 + (d +1)q_k} <  d \| q_k \alpha \| = \left| n\alpha - (n_2 + q_k) \alpha \right|.
$$

(ii) If $n\alpha$ is not in $I_{n_1}^{(k)}$, 
then the distance between $n\alpha$ and $I_{n_2}^{(k)}$ is bigger than 
$ | (n_2 + q_k)\alpha - n_1 \alpha |$ and $|n_2 \alpha  - (n_1+q_{k-1}) \alpha | $.
By \eqref{3.3} and \eqref{3.4}, we have 
$$ \min\{  |n\alpha - n_2 \alpha |,  |n\alpha - (n_2 +q_k) \alpha | \} >  \frac{\ep^{1/2} }{q_k} 
> \frac{\ep}{q_k} > \frac{\ep}{n}.$$
\end{proof}

Denote by $F$ the set of all the points $x$ in $S^1$ such that, 
for all $k\geq K$, we have $x$ in $I_n^{(k)}$ 
with $\ep^{1/2}q_k + q_{k-1} < n <(\ep^{1/2}+ \gamma_k)q_k+ q_{k-1}$. 
By Lemma~\ref{lem:induc}, 

\begin{equation} \label{eqn:geq}
\liminf_{n\to \infty} n \| n\alpha -x \| \geq \ep.
\end{equation}

\begin{lem} \label{lem:check}
Under the assumptions of Lemma \ref{lem:induc}, 
\[
I_{ n_2 }^{(k+1)} \subset B \left( ( n_1 +q_{k-1}) \alpha, \, \frac {\ep_k} {n_1+q_{k-1}} \right),
\]
where
$
\ep_k := \left( \ep^{1/2} + \gamma_{k} + 2\delta_{k}\right) \left( \ep^{1/2} + \gamma_{k+1} + 2\delta_{k+1}\right).
$
\end{lem}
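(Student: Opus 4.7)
\textbf{Proof proposal for Lemma~\ref{lem:check}.}

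The plan is to control the distance from the farthest endpoint of $I_{n_2}^{(k+1)}$ to the point $(n_1+q_{k-1})\alpha$, and then compare it with the radius $\ep_k/(n_1+q_{k-1})$. The two endpoints of $I_{n_2}^{(k+1)}$ are $n_2\alpha$ and $(n_2+q_k)\alpha$. Writing $n_2 = n_1 + q_{k-1} + b q_k$ with $-1\le b\le a_{k+1}-1$ as in \eqref{eqn:range}, the identities already used in the proof of Lemma~\ref{lem:induc} give
\[
|n_2\alpha - (n_1+q_{k-1})\alpha| = b\,\|q_k\alpha\|,\qquad |(n_2+q_k)\alpha - (n_1+q_{k-1})\alpha| = (b+1)\,\|q_k\alpha\|,
\]
so both endpoints lie on the same side of $(n_1+q_{k-1})\alpha$ and the farthest is $(n_2+q_k)\alpha$, at distance $(b+1)\|q_k\alpha\|$. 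Hence it suffices to prove
\[
(b+1)\,\|q_k\alpha\|\,(n_1+q_{k-1}) \le \ep_k .
\]

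First I would bound $b+1 = (n_2 - n_1 - q_{k-1} + q_k)/q_k$ using the upper bound on $n_2$ in \eqref{eqn:3.4} and the lower bound on $n_1$ in \eqref{eqn:3.3}:
\[
b+1 < \frac{(\ep^{1/2}+\gamma_{k+1})q_{k+1} + q_k - \ep^{1/2}q_k - q_{k-1} + q_k - q_{k-1}}{q_k}
= \frac{(\ep^{1/2}+\gamma_{k+1})q_{k+1} + (2-\ep^{1/2}-2\delta_k)q_k}{q_k}.
\]
Since $2-\ep^{1/2}-2\delta_k \le 2$ and $q_k = \delta_{k+1}q_{k+1}$, the right-hand side is at most $(\ep^{1/2}+\gamma_{k+1}+2\delta_{k+1})\,q_{k+1}/q_k$. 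Combining this with $\|q_k\alpha\|\le 1/q_{k+1}$ yields
\[
(b+1)\,\|q_k\alpha\| \le \frac{\ep^{1/2}+\gamma_{k+1}+2\delta_{k+1}}{q_k}.
\]

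Finally, the upper bound on $n_1$ in \eqref{eqn:3.3} gives $n_1+q_{k-1} < (\ep^{1/2}+\gamma_k+2\delta_k)\,q_k$, so multiplying the previous inequality by $n_1+q_{k-1}$ gives
\[
(b+1)\,\|q_k\alpha\|\,(n_1+q_{k-1}) < (\ep^{1/2}+\gamma_k+2\delta_k)(\ep^{1/2}+\gamma_{k+1}+2\delta_{k+1}) = \ep_k,
\]
which is exactly the required inclusion. The only mildly delicate point is the factorization in the second step — making sure that the extra $(2-\ep^{1/2}-2\delta_k)q_k$ summand, coming from the $+q_k - q_{k-1}$ terms in $b+1$, can be absorbed into the $2\delta_{k+1}q_{k+1}$ contribution that matches the $2\delta_{k+1}$ in the definition of $\ep_k$; beyond that, the argument is bookkeeping with the bounds on $n_1$, $n_2$, and $\|q_k\alpha\|$.
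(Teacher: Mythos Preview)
Your proof is correct and follows essentially the same route as the paper's: bound the distance from $(n_1+q_{k-1})\alpha$ to the farther endpoint $(n_2+q_k)\alpha$ by $(b+1)\|q_k\alpha\|$, estimate the numerator using \eqref{eqn:3.4} and $\|q_k\alpha\|<1/q_{k+1}$, and then absorb the upper bound on $n_1+q_{k-1}$ from \eqref{eqn:3.3} into the definition of $\ep_k$. The only difference is cosmetic: the paper simply discards $-n_1-q_{k-1}<0$ when bounding $n_2+q_k-q_{k-1}-n_1$, whereas you first invoke the lower bound on $n_1$ and then give it back via $2-\ep^{1/2}-2\delta_k\le 2$; the end bound $(\ep^{1/2}+\gamma_{k+1}+2\delta_{k+1})/q_k$ is identical.
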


\begin{proof}
Since $n_2 = bq_k + q_{k-1} + n_1$, we have
\begin{equation*}
\begin{split}
| (n_2 + q_k)\alpha - (n_1+q_{k-1}) \alpha | &= (b+1) \| q_k \alpha\| = \frac{n_2 +q_k - q_{k-1} - n_1}{q_k} \| q_k \alpha\| \\
&< \left( \frac{ (\ep^{1/2} + \gamma_{k+1}) q_{k+1} + 2q_k }{q_k} \right) \left( \frac{1}{q_{k+1}} \right) \\
&= \frac{ \ep^{1/2} + \gamma_{k+1} + 2\delta_{k+1} }{q_k} \\ 
& =  \frac{\ep_k}{ (\ep^{1/2} + \gamma_k) q_{k} + 2q_{k-1}}
< \frac{\ep_k}{n_1 + q_{k-1}}. 
\end{split}
\end{equation*}
\end{proof}

Since $\gamma_k, \delta_k$ both tend to $0$ as $k$ tends to infinity,  
the sequence $(\ep_k)_{k \ge 1}$ tends to $\ep$. Thus, Lemma \ref{lem:check} implies that

\begin{equation} \label{eqn:leq}
\liminf_{n \to \infty} n \| n\alpha -x \|\leq \ep.
\end{equation}

By \eqref{eqn:leq} and \eqref{eqn:geq}, the set $F$ is contained
in $\{ x : \liminf_n n \| n\alpha - x \| = \varepsilon  \}$.

\begin{lem} \label{lem:dimh}
If
\[
\lim_{k\to\infty} \frac {\log \gamma_k} {\log a_k} =0
\]
then $\dim_H F=1$.
\end{lem}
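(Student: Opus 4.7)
My plan is to apply the mass distribution principle in the form of Example 4.6 of \cite{Fal90} (which was used already in the proof of Theorem~\ref{ET}) to a Cantor-like subset of $F$. The set $F$ inherits a natural Cantor structure from \eqref{eqn:range}: every $I_{n_2}^{(k+1)}\subset I_{n_1}^{(k)}$ corresponds to $n_2=n_1+q_{k-1}+b\,q_k$ for some $b\in\{-1,\dots,a_{k+1}-1\}$, and the admissibility constraint on $n_2$ confines $b$ to a range of approximately $\gamma_{k+1}a_{k+1}$ consecutive integers near $b\approx \epsilon^{1/2}a_{k+1}$. Under the hypothesis one has $\log(\gamma_{k+1}a_{k+1})=(1+o(1))\log a_{k+1}\to\infty$, so each admissible parent has many admissible children once $k$ is large.

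I would construct $\widetilde{F}\subset F$ by retaining, inside each admissible $I_{n_1}^{(k)}$, every second admissible child; this yields $m_{k+1}\ge \gamma_{k+1}a_{k+1}/4$ children per parent, pairwise separated by at least $\|q_k\alpha\|\ge 1/(2q_{k+1})$. Example 4.6 of \cite{Fal90} then gives
\[
\dim_H F \;\ge\; \dim_H \widetilde F \;\ge\; \liminf_{k\to\infty}\,\frac{\log(m_{K+1}\,m_{K+2}\cdots m_k)}{-\log(m_{k+1}\,\epsilon_{k+1})},
\]
with $\epsilon_{k+1}\gtrsim 1/q_{k+1}$. Using $\log q_k=\sum_{j\le k}\log a_j+O(k)$, the numerator equals $\log q_k - \sum_{j\le k}|\log\gamma_j|+O(k)$ and the denominator equals $\log q_k+|\log\gamma_{k+1}|+O(1)$.

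The finishing step is a Cesaro-type estimate: if $\eta_j\to 0$ and $b_j>0$ with $\sum b_j=\infty$, then $\sum_{j\le k}\eta_j b_j = o\bigl(\sum_{j\le k}b_j\bigr)$. Applied with $\eta_j=|\log\gamma_j|/\log a_j$ and $b_j=\log a_j$, and using the fact that $a_j\to\infty$ forces $\sum\log a_j\to\infty$, this yields $\sum_{j\le k}|\log\gamma_j|=o(\log q_k)$, so that the numerator is $(1+o(1))\log q_k$. Provided also that $|\log\gamma_{k+1}|=o(\log q_k)$, the denominator is likewise $(1+o(1))\log q_k$ and the liminf is at least $1$, giving $\dim_H F=1$. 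The delicate point I anticipate is precisely this last estimate, since the hypothesis alone provides only $|\log\gamma_{k+1}|=o(\log a_{k+1})$; when $\log a_{k+1}$ happens to dominate $\log q_k$, I would pass to a suitable subsequence $(k_i)$ of levels (equivalently, coarse-graining the Cantor construction by iterating the child-selection several steps before applying Falconer's bound), chosen so that $|\log\gamma_{k_{i+1}}|$ remains comparable to $\log q_{k_i}$, thereby preserving the asymptotic ratio $1$.
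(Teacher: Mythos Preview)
Your approach is essentially the paper's: both feed the Cantor structure of $F$ into Example~4.6 of \cite{Fal90} and arrive at the same ratio
\[
\frac{\log q_{k-2}+\sum_{i=K}^{k-2}\log(\gamma_i-\delta_i)}{\log q_{k-2}-\log\gamma_{k-1}+O(1)}.
\]
The one structural difference is that the paper avoids your thinning step. Instead of working with individual $I_n^{(k)}$, it observes that the admissible indices $n$ in each residue class modulo $q_{k-1}$ form a run of roughly $\gamma_k q_k/q_{k-1}$ consecutive $b$-values in \eqref{eqn:range}; the corresponding $I_n^{(k)}$'s are adjacent and coalesce into a single subinterval of $I_m^{(k-1)}$. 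Thus $F_k$ is \emph{already} a union of $q_{k-1}$ well-separated ``blocks'', one per cell of $\cA^{(k-1)}$, with $m_k\approx \gamma_{k-1}a_{k-1}$ blocks of $F_k$ inside each block of $F_{k-1}$ and gaps $\epsilon_k\gtrsim 1/q_{k-1}$. This is a cleaner bookkeeping but yields the identical Falconer estimate.

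Regarding the point you flag as delicate---whether $|\log\gamma_{k-1}|=o(\log q_{k-2})$ follows from the hypothesis---the paper does not address it: it simply writes the displayed ratio and asserts it equals~$1$ ``since $\log\gamma_k/\log a_k$ goes to zero'', with no further argument and no coarse-graining. So your proposal is in fact more careful than the paper's proof here. For the application to Theorem~\ref{thm:14} the sequence $(\gamma_k)$ is at one's disposal, and a slowly decaying choice such as $\gamma_k=(\log a_k)^{-1}$ keeps $|\log\gamma_{k-1}|$ negligible; this is presumably the intended reading.
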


\begin{proof}
Let 
$$F_k = \bigcup_{\ep^{1/2}q_k + q_{k-1} < n <(\ep^{1/2}+ \gamma_k)q_k+ q_{k-1}}  I_n^{(k)}. $$ 
Then $F = \cap_{k \ge K} F_k$.
We may assume that $K$ is large enough to ensure   
that $\gamma_k > \delta_k$ for all $k \ge K$.

Each  $F_k$ is a union of $q_{k-1}$ intervals of length at least   
$\left\lfloor \frac{ \gamma_k q_k}{q_{k-1}}  \right \rfloor \| q_{k-1} \alpha \|$ which are separated by at least 
$$ \| q_{k-2} \| -\left\lceil \frac{ \gamma_k q_k}{q_{k-1}}  \right \rceil \| q_{k-1} \alpha \|.$$
By Example 4.6. of \cite{Fal90}, we obtain that 
\[
\dim_H F \ge \liminf_{k  \to\infty} \frac{ \log (m_K m_{K+1} \cdots m_{k-1}) } {- \log ( m_k \ep_k )} ,
\]
where $m_k$ is the smallest number of intervals of $F_{k}$ in each interval of $F_{k-1}$
and $\ep_k$ is the minimal the distance between intervals of $F_k$. 
Then we have 
$$
m_k > \left( \frac{\gamma_{k-1} q_{k-1}}{q_{k-2}} -1 \right) 
= \frac{q_{k-1}}{q_{k-2}}(\gamma_{k-1}-\delta_{k-1}),  \qquad m_K = q_{K-1},
$$
and 
$$\
\ep_k \ge  \| q_{k-2} \| - \left (  \frac{ \gamma_k q_k}{q_{k-1}}  +1 \right)  \| q_{k-1} \alpha \| 
\ge (1-\gamma_k - \delta_k) \| q_{k-2}\alpha \| > \frac{1-\gamma_k-\delta_k}{2q_{k-1}}.
$$
Since $\log \gamma_k / \log a_k$ goes to zero, 
 we have
\[ 
\dim_H F \ge 
 \liminf_{k  \to\infty} \frac {\log q_{k-2} + \sum_{i=K}^{k-2} 
 \log ( \gamma_i - \delta_i)} {\log q_{k-2} - \log \gamma_{k-1} - \log(1-\gamma_k -\delta_k)+\log 2} = 1.
\]
\end{proof}

Thus, the preceding three lemmas prove the first part of Theorem \ref{thm:14}.

\medskip

Now  we prove the second part of Theorem \ref{thm:14}. 

\begin{lem} \label{lem:estim}
Let $\ep>1/4$. Then the set 
$\Bad^\ep_+(\alpha)$ is empty.  
\end{lem}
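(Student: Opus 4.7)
The plan is a proof by contradiction: I assume $\ep>1/4$ and that some $x\in\Bad^\ep_+(\alpha)$ exists, and seek a contradiction by tracking $x$ through the nested intervals $I^{(k+1)}_{n_{k+1}}\subset I^{(k)}_{n_k}$ at large levels $k$ where $a_{k+1}$ and $a_{k+2}$ are arbitrarily large (available thanks to the hypothesis $a_k\to\infty$). First, if one of the two endpoints of $I^{(k+1)}_{n_{k+1}}$ has index $\le q_k$, then the product $q\|q\alpha-x\|$ for that endpoint is bounded above by $q_k(\|q_k\alpha\|+\|q_{k+1}\alpha\|)\le 2/a_{k+1}$, which is eventually smaller than $\ep$ and contradicts $x\in\Bad^\ep_+$. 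So at every large $k$ both endpoints of $I^{(k+1)}_{n_{k+1}}$ have index $>q_k$, which forces $I^{(k+1)}_{n_{k+1}}$ to be a short sub-interval of length exactly $\|q_k\alpha\|$ lying strictly interior to $I^{(k)}_{n_k}$.

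In this configuration I will introduce the normalized parameters
\[
\mu_k = \frac{n_k}{q_k},\qquad \gamma_k = \frac{\|n_k^*\alpha-x\|}{L_k},\qquad \nu_k = \frac{\|n_{k+1}\alpha-x\|}{\|q_k\alpha\|},
\]
with $L_k$ the length of $I^{(k)}_{n_k}$, and check that the four candidate products $q\|q\alpha-x\|$ for $q\in\{n_k,n_k^*,n_{k+1},n_{k+1}^*\}$ are asymptotically equal to $\mu_k(1-\gamma_k),\ \mu_k\gamma_k,\ \gamma_k\nu_k,\ \gamma_k(1-\nu_k)$ respectively; the two main inputs are $q_kL_k\to 1$ and $q_{k+1}\|q_k\alpha\|\to 1$, both consequences of $a_k\to\infty$. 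The hypothesis $x\in\Bad^\ep_+$ forces each of the four products to be $\ge\ep+o(1)$. Summing the last two (whose sum is exactly $\gamma_k$) gives $\gamma_k\ge 2\ep+o(1)>1/2$, while the first combined with $\mu_k\le 1$ gives $\gamma_k\le 1-\ep+o(1)$.

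The decisive step is the recursion. Because the Case (ii) sub-interval has length exactly $\|q_k\alpha\|$ and $x$ splits it into pieces of length $\|n_{k+1}\alpha-x\|$ and $\|q_k\alpha\|-\|n_{k+1}\alpha-x\|$, one has $\gamma_{k+1}=1-\nu_k$ exactly. Combined with the asymptotic constraint $\gamma_k\nu_k\ge\ep+o(1)$, this yields
\[
\gamma_{k+1}\le g(\gamma_k)+o(1),\qquad g(\gamma):=1-\frac{\ep}{\gamma}.
\]
For $\ep>1/4$ the quadratic $\gamma^2-\gamma+\ep$ has negative discriminant $1-4\ep$ and is therefore strictly positive on $(0,\infty)$, so $g(\gamma)<\gamma$ for every $\gamma>0$, and on the compact set $[2\ep,1-\ep]$ we have $\gamma-g(\gamma)=(\gamma^2-\gamma+\ep)/\gamma\ge c$ for some constant $c>0$. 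Thus $\gamma_{k+1}\le\gamma_k-c+o(1)$, so the sequence would have to drop below $2\ep$ after finitely many steps, contradicting $\gamma_k\ge 2\ep+o(1)$.

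The main technical obstacle will be the careful bookkeeping of the $o(1)$ error terms in the asymptotic identifications of the four products (errors of order $q_{k-1}/q_k$ and $\|q_{k+1}\alpha\|/\|q_k\alpha\|$, which both vanish under $a_k\to\infty$), to make sure the strict gap $c$ from $g(\gamma)<\gamma$ survives in the limit; the distinction between short and long types of $I^{(k)}_{n_k}$ affects $L_k$ by only a factor $1+o(1)$ and is otherwise harmless.
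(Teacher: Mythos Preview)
Your argument is correct and, once unwound, rests on the very same recursion as the paper's. The key link is that your $\gamma_k$ is asymptotically $n_{k+1}/q_{k+1}$, which is exactly the quantity $\mu_{k+1}$ the paper tracks; both proofs then reduce to $\mu_{k+1}\le 1-\ep/\mu_k+o(1)$ and the positivity of $t^2-t+\ep$ when $\ep>1/4$.

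The paper's execution is considerably leaner. It works with $\mu_k=n_k/q_k$ directly and uses only the single constraint $(\ep-\delta)/n_k<|x-n_k\alpha|\le|n_{k+1}\alpha-n_k\alpha|$ at each step. Writing $n_{k+1}=n_k+q_{k-1}+bq_k$ and expanding yields, after completing the square,
\[
\mu_k-\mu_{k+1}\;>\;\frac{1}{\mu_k}\Bigl(\bigl(\mu_k-\tfrac12\bigr)^2+\ep-\tfrac14\Bigr)-o(1)\;\ge\;\ep-\tfrac14-o(1)\;>\;0,
\]
so $\mu_k\to-\infty$, contradicting $0<\mu_k\le 1$. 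This bypasses your preliminary case split, the auxiliary parameters $\gamma_k,\nu_k$, and the constraints at $n_k^*$ and $n_{k+1}^*$ entirely. Your extra constraints serve only to confine $\gamma_k$ to a compact interval before applying the recursion; the paper simply lets the sequence run to $-\infty$.

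One small imprecision in your preliminary case: a single product $q\|q\alpha-x\|<\ep$ at an endpoint of index $q\le q_k$ does not by itself contradict the $\liminf$ condition defining $\Bad^\ep_+(\alpha)$; you need $q$ to be large. The shared endpoint is always $n_k$ or $n_k^*$, and you should note that these tend to infinity provided $x$ is not on the orbit $\{m\alpha\bmod 1\}$ (otherwise some fixed $m\alpha$ would lie in intervals of length $\le 2/q_k\to 0$, forcing $x=m\alpha$). Orbit points are dispatched immediately since $(q_k+m)\|q_k\alpha\|\to 0$. With this patch your argument is complete.
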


\begin{proof}
Suppose that $\Bad^\ep_+(\alpha)$ is nonempty and let $x$ be in this set. 
For any small positive $\delta$ 
with $\ep- 2\delta > 1/4$, we can choose $K$ large enough that 
$n \| n\alpha -x \|\geq \ep - \delta$ for any $n>q_K$
and $\delta_{k+1} + \delta_{k+1}^2 < \delta$ for $k \ge K$.

For $k=K, K+1,\ldots,$  denote by $I_{n_k}^{(k)}$ the element
of the partition ${\cA}^{(k)}$ containing $x$. 

For $n_{k+1} = n_k + c q_k + q_{k-1}$, 
the conditions  $x \notin B(n_k\alpha, (\ep -\delta)/n_k)$ and $x  \in I_{n_{k+1}}^{(k+1)}$ 
imply that 
\begin{align*}
\frac{\ep -\delta}{n_k} 
&< \left| x - n_k \alpha \right|  \le \left|  n_{k+1} \alpha - n_k \alpha \right| = |(cq_k + q_{k-1})\alpha|  \\
&=  \| q_{k-1}\alpha \| - c \| q_k \alpha \| = (a_{k+1} -c) \| q_k\alpha\| + \| q_{k+1}\alpha \|
<  \frac{a_{k+1} -c}{q_{k+1}} + \frac {1}{q_{k+2} }.
\end{align*}
Therefore,
$$n_{k+1} = n_k + cq_k + q_{k-1} < n_k + a_{k+1} q_k + q_{k-1} 
+ \frac{  q_k q_{k+1}}{q_{k+2}} - \frac{(\ep -\delta) q_k q_{k+1}}{n_k},$$
which implies  
$$ \frac{n_{k+1}}{q_{k+1}} < 1 + \frac{n_k}{q_{k+1}} + \frac{q_k}{q_{k+2}} - \frac{(\ep -\delta) q_k}{n_k} 
< 1 + 2\delta_{k+1} - (\ep -\delta) \frac{q_k}{n_k}.$$
Therefore,  for $k \ge K$, we get 
\begin{align*}
 \frac{n_k}{q_k} - \frac{n_{k+1}}{q_{k+1}}  &> - 1 - 2\delta_{k+1} + (\ep -\delta) \frac{q_k}{n_k} + \frac{n_k}{q_k} \\
&=  \frac{q_k}{n_k}  \left(  \left( \frac{n_k}{q_k}\right)^2 - (1 + 2\delta_{k+1})\frac{n_k}{q_k}  + (\ep -\delta)  \right) \\
&=  \frac{q_k}{n_k}  \left(  \left( \frac{n_k}{q_k} - \frac12 - \delta_{k+1} \right)^2  
+ \ep -\delta - \frac 14 - \delta_{k+1} - \delta_{k+1}^2  \right) \\
&\ge \ep -2\delta - \frac 14 > 0.  
\end{align*}
Thus,  we have 
$$ \frac{n_k}{q_k} < \frac{n_K}{q_K} - (k-K) \left( \ep -2\delta - \frac 14 \right) \to -\infty \ \text{ as } \ k \to \infty, $$   
which contradicts that $0 < n_k / q_k \le 1$.   
\end{proof}

\section{On real irrational numbers which are singular on average}

In this section we consider the case $n = m = 1$ and 
characterize the $1 \times 1$ matrices $(\alpha)$ 
which are singular on average, which gives the proof of the equivalence between (ii) and (iii) of Theorem~\ref{thm:general}.

\begin{prop}
Let $\alpha$ be a real number and $(p_k/q_k)_{k \ge 1}$ the sequence of its convergents. 
Then, $\alpha$ is singular on average if and only if $(q_k)^{1/k}$ tends to infinity 
with $k$. 
\end{prop}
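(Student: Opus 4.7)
The plan is to translate the singular-on-average condition into an intrinsic statement about the denominators $q_k$. For every integer $\ell \ge 1$, let $k(\ell)$ be the unique index with $q_{k(\ell)} \le 2^\ell < q_{k(\ell)+1}$; then the minimum of $\|q\alpha\|$ over $0 < q \le 2^\ell$ is $\|q_{k(\ell)}\alpha\|$, and this minimum satisfies $1/(2 q_{k(\ell)+1}) < \|q_{k(\ell)}\alpha\| < 1/q_{k(\ell)+1}$. Setting $A := \log_2(1/c)$, the singular-on-average condition ``$\|q\alpha\| \le c\cdot 2^{-\ell}$ has a solution $0 < q \le 2^\ell$'' is therefore equivalent to $\ell \le \log_2 q_{k(\ell)+1} - A + \theta$ for some quantity $\theta \in [0,1)$ depending on $k(\ell)$ and $\alpha$. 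Grouping the integers $\ell$ by \emph{block} $[\log_2 q_k,\log_2 q_{k+1})$ of length $L_k := \log_2(q_{k+1}/q_k)$, the \emph{bad} $\ell$'s (where the condition fails) in block $k$ form a top subinterval of real length in $(A-1,A)$, truncated to the block. Write $K(N) := \max\{k : q_k \le 2^N\}$.

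For the direction $(ii) \Rightarrow (iii)$, suppose $q_k^{1/k} \to \infty$: for any $M > 0$, eventually $\log_2 q_k \ge Mk$, which gives $K(N) \le N/M + O(1)$ and hence $K(N)/N \to 0$. For fixed $c$, each of the $K(N)+O(1)$ relevant blocks contains at most $A+1$ bad integers, so the total bad count is $\le (A+1) K(N) = o(N)$. Therefore $\alpha$ is singular on average.

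For the direction $(iii) \Rightarrow (ii)$, argue by contraposition. Assume $q_k^{1/k}$ does not tend to infinity: pick a subsequence $k_j \to \infty$ with $\log_2 q_{k_j}/k_j \le C$ for some $C > 0$, and set $N_j := \lceil \log_2 q_{k_j} \rceil$. Then $K(N_j) \ge k_j \ge \eta N_j$ with $\eta := 1/(2C)$ for $j$ large. The key structural input is the elementary recursion $q_{k+2} \ge q_{k+1} + q_k > 2 q_k$ for $k \ge 1$, which yields $L_k + L_{k+1} > 1$. Fix $c = 1/4$ (so $A = 2$), and consider a pair of consecutive blocks $(2i, 2i+1)$ with $1 \le 2i$ and $2i+1 \le K(N_j) - 1$. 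I claim each such pair contains at least one bad integer $\ell \le N_j$: indeed, if some $L_k$ in the pair exceeds $1$, the bad subinterval in block $k$ has length $\min(L_k, A - \theta_k)$ with both $L_k > 1$ and $A - \theta_k > 1$, so it has length strictly greater than $1$ and hence contains at least one integer; and if both $L_{2i}, L_{2i+1} \le 1$, both blocks are entirely bad, so the pair of total length $L_{2i} + L_{2i+1} > 1$ is entirely bad and contains at least one integer.

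Summing over the $\ge \lfloor K(N_j)/2 \rfloor - O(1) \ge \eta N_j/2 - O(1)$ such pairs gives a bad density for $c = 1/4$ bounded below by $\eta/2 > 0$ along the subsequence $N_j$, contradicting the singular-on-average hypothesis and completing the proof. The main delicate point throughout is the discrepancy between the real length of a bad subinterval in a block and the integer count of bad $\ell$'s it contains: a block with $L_k < 1$ may contribute no bad integer at all. The pairing argument based on $L_k + L_{k+1} > 1$ is precisely designed to circumvent this, guaranteeing at least one bad integer per pair of consecutive blocks.
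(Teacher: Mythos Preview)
Your proof is correct and follows essentially the same approach as the paper. For the direction $q_k^{1/k}\to\infty \Rightarrow$ singular on average, both arguments bound the number of bad $\ell$'s per block by $O_c(1)$ and use $K(N)/N\to 0$. For the converse, both fix $c=1/4$ and exploit $q_{k+2}>2q_k$ to produce $\gtrsim k$ bad integers below $\log_2 q_{2k}$; the only cosmetic difference is that the paper picks the unit interval $[\log_2 q_{k+1}-1,\log_2 q_{k+1})$ directly (each containing exactly one bad integer, with the same-parity intervals disjoint), whereas you pair consecutive blocks and run a short case split on whether $L_k>1$, which is a slightly more elaborate route to the same count.
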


\begin{proof}
Let $0< c < 1/2$ and let $k \ge 3$ be an integer. 
By the classical theory of continued fractions, we have     
$$
\min_{0 < n < q_{k+1}} \| n \alpha \| = \| q_k \alpha\|.
$$ 
Therefore, for each integer $X$ with $q_k \le X < q_{k+1}$,
the inequalities
\begin{equation}\label{eqn:1.1bis}
\Vert x \alpha \Vert \le c \, X^{-1}        
\quad {\rm and} \quad 0 < \vert x \vert \le X     
\end{equation}
have a solution 
if and only if $\Vert q_k \alpha \Vert \le  c X^{-1}$.  
Thus, for each integer $\ell$ in $[\log_2 q_k, \log_2 q_{k+1})$ 
the inequalities \eqref{eqn:1.1bis} have no solutions for $X = 2^\ell$
if and only if 
$$ 
- \log_2 \left( \| q_{k} \alpha \|/c \right)  < \ell < \log_2 q_{k+1}.
$$

Since $\|q_k \alpha \| < 1/q_{k+1}$,
the number of integers $\ell$ in $[\log_2 q_k, \log_2 q_{k+1})$ 
such that \eqref{eqn:1.1bis} have no solutions for $X = 2^\ell$ is at most 
$$
\left \lceil \log_2 q_{k+1}  +  \log_2 \left( \| q_{k} \alpha \| / c \right) \right \rceil
< \log_2 q_{k+1}  +  \log_2 \left( \| q_{k} \alpha \| / c \right) + 1 <  \log_2 (1/c) + 1.
$$
Hence, for an integer $N$ with 
$\log_2 q_k \le N < \log_2 q_{k+1}$, the number of integers $\ell$ in $\{1, \ldots, N\}$ 
such that \eqref{eqn:1.1bis}  
have no solutions for $X = 2^\ell$ is bounded from above by $(\log_2 (1/c) + 1)(k+1)$,
 thus 
\begin{multline*}
 \frac 1N  \Card \{ \ell \in \{1, \ldots, N\} : \mathrm{\; inequalities \;} \eqref{eqn:1.1bis} 
\mathrm{\;has \;no \;solution \;for} \; X=2^\ell \}  \\
\le \frac {(\log_2 (1/c) + 1)(k+1)}{N}  \le \frac {(\log_2 (1/c) + 1)(k+1)}{\log q_k}, 
\end{multline*}
which converges to 0 as $k$ goes to infinity, as soon as $(q_h)^{1/h}$ tends to infinity. 
Therefore, $\alpha$ is singular on average if $(q_h)^{1/h}$ tends to infinity.

Suppose that  $\alpha$ is singular on average.  
Choose $c = 1/4$.
Let $\ell$ be an integer satisfying $\log_2 q_{k+1} -1 \le \ell < \log_2 q_{k+1}$ for some $k \ge 1$.
Then, we have
$$ 
\| q_k \alpha \| > \frac{1}{2q_{k+1}} = \frac{2c}{q_{k+1}} > \frac{c}{2^\ell}.
$$
Since $\|n \alpha \| \ge \| q_k \alpha \|$ for any $0 < n < q_{k+1}$, 
we conclude that 
\eqref{eqn:1.1bis} have no solutions for $X = 2^\ell$ 
if $\ell$ is an integer in $[\log_2 q_{k+1} -1 , \log_2 q_{k+1})$. 
Recall that that $q_{k+1} \ge 2q_{k-1}$, thus the intervals 
$[\log_2 q_{k-1} -1 , \log_2 q_{k-1})$ and $[\log_2 q_{k+1} -1 , \log_2 q_{k+1})$ are disjoint.
Let $N$ be an integer with $\log_2 q_{2k} \le N < \log_2 q_{2(k+1)}$.
Since the intervals 
$$
[\log_2 q_2 -1, \log_2 q_2), [\log_2 q_4 -1, \log_2 q_4), \dots , [\log_2 q_{2k} -1, \log_2 q_{2k})
$$
are disjoint,
the number of integers $\ell$ in $\{1, \ldots, N\}$ 
such that \eqref{eqn:1.1bis} have no solutions for $X = 2^\ell$ and $c=1/4$ is at least $k$.
Hence, we have
\begin{align*}
\frac {k}{ \log q_{2k+2}} \le \frac {k}{N} 
< \frac 1N  \Card \{ \ell \in \{1, \cdots, N\} :  \eqref{eqn:1.1bis} \mathrm{\;have \;no \;solutions \;for} \; X=2^\ell, c=1/4  \} 
\end{align*}
and the condition of singularity on average implies that the right hand side 
of the inequality goes to 0 as $N$ goes to infinity.
By the monotonicity of $(q_k)_{k \ge 1}$, we deduce that $(q_k)^{1/k}$ goes to infinity.  
\end{proof}

\section{Proof of Theorem~\ref{thm:1.6}}

The key ingredient for the proof of Theorem~\ref{thm:1.6} is the following statement.

\begin{thm}  \label{thm:5.1}
Let $n \ge 1$ be an integer. 
Let $(\bfy_k)_{k \ge 1}$ be a sequence of integer vectors such that    
$|\bfy_{k+1}| / |\bfy_k| \ge 4n /(1-2\delta) +1$ 
for $k \ge 1$, where $| \cdot |$ is the $L_2$-norm on $\R^n$.
Assume that $|\bfy_k|^{1/k}$ tends 
to infinity with $k$ (or some other suitable condition). 
Then, for any $\delta$ in $(0, 1/2)$, setting
$$
{\cS}_{\delta} = \{\bfx \in [0, 1]^n : \hbox{there exists $k_0 (\bfx)$ such that } 
\| y_{k,1} x_1 + \ldots + y_{k,n} x_n \| > \delta        
\hbox{ for all $k \ge k_0 (\bfx)$} \}, 
$$
we have
$$
\dim_H  {\cS}_{\delta} = n. 
$$
\end{thm}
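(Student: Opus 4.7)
The strategy mirrors the proof of Theorem~\ref{ET}, replacing one-dimensional intervals by $n$-dimensional cubes and using the mass distribution principle in place of its special form in Example 4.6 of \cite{Fal90}. For each $k$, the set
\[
G_k := \{\bfx \in [0,1]^n : \|\langle \bfy_k, \bfx\rangle\| > \delta\}
\]
is the complement inside $[0,1]^n$ of a union of parallel closed slabs perpendicular to $\bfy_k/|\bfy_k|$: the ``good'' slabs have thickness $(1-2\delta)/|\bfy_k|$ (in the direction $\bfy_k/|\bfy_k|$) and are separated by ``bad'' slabs of thickness $2\delta/|\bfy_k|$. Since a point belongs to $\cS_\delta$ whenever it belongs to all but finitely many $G_k$, it suffices to produce a subset $E \subset \bigcap_{k \ge K_0} G_k$ with $\dim_H E = n$ for some fixed $K_0$.

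I would build $E$ as a Cantor-like set of cubes. Fix a small $\eta \in (0,(1-2\delta)/\sqrt{n})$ and set $r_k := \eta/|\bfy_k|$. Put $\mathcal{C}_0 = \{[0,1]^n\}$ and, inductively, define $\mathcal{C}_k$ to consist of all cubes of side $r_k$ belonging to the natural sub-grid of some parent cube $C \in \mathcal{C}_{k-1}$ and lying entirely inside $G_k$. Because the projection of such a cube onto $\bfy_k/|\bfy_k|$ has length at most $r_k\sqrt{n} < (1-2\delta)/|\bfy_k|$, every good slab of $G_k$ has room for such cubes. A direct volume computation --- the measure of $G_k \cap C$ is essentially $(1-2\delta)\vol(C)$, while the sub-cubes truncated by the $O(|\bfy_k|\,r_{k-1})$ boundary hyperplanes $\{\langle\bfy_k,\bfx\rangle = j \pm \delta\}$ together occupy volume $O(\eta)\vol(C)$ --- then gives
\[
M_k := \min_{C \in \mathcal{C}_{k-1}} \#\{C' \in \mathcal{C}_k : C' \subset C\} \; \ge \; (1 - 2\delta - c_n \eta)\bigl(r_{k-1}/r_k\bigr)^n,
\]
where $c_n$ depends only on $n$. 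For $\eta$ small enough, the hypothesis $|\bfy_k|/|\bfy_{k-1}| \ge 4n/(1-2\delta) + 1$ guarantees $M_k \ge 2$, so $E := \bigcap_k \bigcup_{C \in \mathcal{C}_k} C$ is non-empty (in fact uncountable).

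To extract the dimension, distribute mass uniformly across $\mathcal{C}_k$ at each level and pass to the limit probability measure $\mu$ supported on $E$, whose value on any $C \in \mathcal{C}_k$ is $(M_1 \cdots M_k)^{-1}$. For $\bfx \in E$ and $r_k \le r \le r_{k-1}$, the ball $B(\bfx, r)$ meets at most $O((r/r_k)^n)$ cubes of $\mathcal{C}_k$, so
\[
\mu(B(\bfx, r)) \; \le \; \frac{C\,(r/r_k)^n}{M_1 \cdots M_k} \; \le \; C\,(1-2\delta - c_n \eta)^{-k}\, r^n.
\]
Since $|\bfy_k|^{1/k}\to\infty$ we have $-\log r_k = \log|\bfy_k| - \log\eta \gg k$, so for every $\gamma > 0$ the prefactor $(1-2\delta - c_n\eta)^{-k} = e^{O(k)}$ is dominated by $r^{-\gamma}$ once $k$ is large enough. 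The mass distribution principle then gives $\dim_H E \ge n - \gamma$, and letting $\gamma \searrow 0$ yields $\dim_H \cS_\delta \ge n$; the reverse inequality is trivial.

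\textbf{Main obstacle.} The delicate step is the lower bound on $M_k$. In dimension one the intervals of $\mathcal{C}_k$ fall cleanly inside the good slabs, but when $n \ge 2$ the slab boundaries $\{\langle\bfy_k, \bfx\rangle = j \pm \delta\}$ meet the grid $r_k\bZ^n$ obliquely, and each such hyperplane destroys on the order of $(r_{k-1}/r_k)^{n-1}$ sub-cubes of the parent. One must check carefully that, summed over the $O(|\bfy_k| r_{k-1})$ relevant hyperplanes, the total loss is only a fraction $O(\eta)$ of the total number of sub-cubes in $C$. It is this geometric estimate, and the required compatibility with the growth assumption, that dictates the exact form of the hypothesis $|\bfy_{k+1}|/|\bfy_k| \ge 4n/(1-2\delta) + 1$.
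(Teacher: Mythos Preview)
Your approach is correct and follows the same overall strategy as the paper: build a nested Cantor-type set inside $\bigcap_{k\ge K_0} G_k$ and read off the dimension via the mass distribution principle, the growth hypothesis $|\bfy_k|^{1/k}\to\infty$ killing the exponential prefactor exactly as in your last display.

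The one implementation difference worth recording is that the paper avoids the volume argument you flag as the ``main obstacle''. Instead of axis-aligned cubes from a sub-grid, the paper takes as level-$k$ pieces the balls
\[
B\Bigl(\mathbf w_k(\mathbf j),\;\frac{1-2\delta}{2|\bfy_k|}\Bigr),
\]
where the centers $\mathbf w_k(\mathbf j)$ are chosen explicitly so that $\langle \bfy_k,\mathbf w_k(\mathbf j)\rangle\equiv \tfrac12\pmod 1$; concretely, with $h$ the index of the largest coordinate of $\bfy_k$, the $\mathbf w_k(\mathbf j)$ form a translate of the cubic lattice $|y_{k,h}|^{-1}\Z^n$. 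Because the centers sit on a genuine lattice, the number of level-$k$ balls inside a level-$(k-1)$ ball is bounded below by a direct packing count,
\[
m_k \;\ge\; \Bigl(\frac{1-2\delta}{n}\Bigl(\frac{|\bfy_k|}{|\bfy_{k-1}|}-1\Bigr)-2\Bigr)^n,
\]
with no need to track how many sub-cubes are clipped by the oblique hyperplanes $\{\langle\bfy_k,\bfx\rangle=j\pm\delta\}$. Your cube-and-volume route reaches the same conclusion, but the explicit choice of centers is exactly what lets the paper sidestep the boundary bookkeeping you identify as delicate.
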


\begin{proof}
For $k \ge 1$ and $\delta$ in $(0, 1/2)$, set
$$ 
E_{k, \delta} = \{\bfx = (x_1, \ldots , x_n) \in [0, 1]^n : \| y_{k,1} x_1 + \ldots + y_{k,n} x_n \| > \delta  \}.
$$
Let $h$ be an index such that 
$$
| y_{k,h}| = \max_{ 1 \le i \le n} | y_{k,i}|.  
$$ 
For every $(n-1)$-tuple $( j_1, \dots, j_{h-1},  j_{h+1}, \dots, j_n$) of integers 
from $\{0, 1, \ldots , | y_{k,h}| - 1\}$,
there exist an integer $p$ and a real number $t$ with $0 \le t < 1/ | y_{k,h}|$,   
depending on $j_1, \dots, j_{h-1},  j_{h+1}, \dots, j_n$, such that  
$$
y_{k,1} \frac{j_1}{| y_{k,h}|} + \dots + y_{k,h-1} \frac{j_{h-1}}{| y_{k,h}|} 
+ y_{k,h} t + y_{k,h+1} \frac{j_{h+1}}{| y_{k,h}|} + \dots + y_{k,n} 
\frac{j_n}{ y_{k,h}|}  = \frac 12 + p. 
$$

For each integer vector $\mathbf j =  ( j_1, \dots , j_n)$ with $0 \le j_1, \dots , j_n < |\mathbf y_k^{(h)}|$, we write
$$
\mathbf w_k(\mathbf j) = \left(  \frac{j_1}{| y_{k,h}|}, \dots, \frac{j_{h-1}}{| y_{k,h} |}, 
\frac{j_{h}}{|y_{k,h}|} + t ,\frac{j_{h+1}}{|y_{k,h}|}, \dots , \frac{j_n}{| y_{k,h} |} \right) .
$$
Then, there exists $\eta$ in $\{-1, 1\}$ such that  
$$ 
\left \| \mathbf y_k  \cdot \mathbf w_k(\mathbf j) \right\| = \Bigl\| \eta j_h + p + \frac 12 \Bigr\| = \frac 12.
$$
For each $\mathbf v$ with $|\mathbf v| < \frac{1-2\delta}{2  |\mathbf y_k|}$, we have 
$$ |  \mathbf y_k \cdot \mathbf v| \le |\mathbf y_k | \cdot |\mathbf v| < \frac 12 - \delta.$$
Therefore,
$$\eta j_h + p + \delta < \mathbf y_k \cdot (\mathbf w_k(\mathbf j) + \mathbf v) < \eta j_h + p + 1 - \delta,$$
i.e.,
$$ \| \mathbf y_k \cdot (\mathbf w_k(\mathbf j)+ \mathbf v) \| > \delta.$$
Let $B( \mathbf w, r) = \{ \mathbf v \in \mathbb R^n:  | \mathbf w - \mathbf v | < r\}$ 
be the ball centered at $\mathbf v$ of radius $r$
and set
$$
G_{k,\delta} := \bigcup_{0 \le j_1, \dots , j_n <  
|\mathbf y_k|}  B \left( \mathbf w_k(\mathbf j), \frac{1-2\delta}{2  |\mathbf y_k|} \right). 
$$
Then $G_{k,\delta}$ is contained in $E_{k, \delta}$.
The balls composing $G_{k,\delta}$ are disjoint since 
$$
|\mathbf w_k(\mathbf j) - \mathbf w_k(\mathbf j')| \ge \frac{1}{| y_{k,h}|} 
\ge \frac{1}{| \mathbf y_k|}, \quad \hbox{for $\mathbf j \ne \mathbf j'$}. 
$$
For $\  |\mathbf j - \mathbf j' |_{\infty} \le d$, with $d$ in $\mathbb N$, we have 
$$
| \mathbf w_k(\mathbf j)  - \mathbf w_k(\mathbf j') | \le \frac{\sqrt{nd^2 + 2d +1}}{ | y_{k,h}|} 
\le \frac{\sqrt{nd^2 + 2d +1}}{|\mathbf y_k| / \sqrt n}  \le \frac{n(d+1)}{|\mathbf y_k|} .
$$ 
Therefore, any ball of radius bigger than $\frac{n (d+ 1)}{2 |\mathbf y_k|} + \frac{1-2\delta}{2  |\mathbf y_k|} $ 
contains $d^n$ balls of the form $B \left( \mathbf w_k(\mathbf j), \frac{1-2\delta}{2  |\mathbf y_k|} \right)$.
Hence, each ball of $G_{k-1,\delta}$ contains at least    
$$
\left( \frac{1-2\delta}{n} \left( \frac{|\mathbf y_{k}|}{|\mathbf y_{k-1}|} -1 \right) -2 \right)^n
$$ 
balls of $G_{k,\delta}$. 
The condition $|\bfy_{k+1}| / |\bfy_k| \ge 4n /(1-2\delta) +1$ implies that 
$$
m_k \ge \left( \frac{1-2\delta}{n} \left( \frac{|\mathbf y_{k}|}{|\mathbf y_{k-1}|} -1 \right) -2  \right)^n \ge  
\left( \frac{2(1-2\delta)}{1-2\delta +4n} \cdot \frac{ |\bfy_{k}| }{|\bfy_{k-1}|} \right)^n,
$$
where $m_k$ is a lower bound for the number of balls of level $k$ contained 
in a ball of level i$k-1$. 
Any two balls are separated by at least $\ep_k := 2\delta/|\mathbf y_k|$. 
Putting $C =  \left( \frac{2(1-2\delta)}{1-2\delta + 4 n} \right)^n$,
the mass distribution principle implies that 
$$
\dim_H \cS_\delta \ge \dim_H \left( \bigcap_k G_{k,\delta} \right) 
= \liminf_k \frac{ \log (m_1 \cdots m_{k-1})}{ - \log m_k \ep_k^n}        
= \liminf_k \frac{ n \log |\mathbf y_{k-1}| + k C }{ \log |\mathbf y_{k-1}| } = n.
$$
This establishes the theorem. 
\end{proof}

\medskip

\begin{proof}[Proof of Theorem~\ref{thm:1.6}]     
We keep the notation from Subsection 1.2 and Theorem~\ref{thm:1.6}. 
In particular, 
$$
\uy_k = {}^t(y_{k,1}, \dots , y_{k,n}) , \quad (k \ge 1),
$$
is a sequence of best approximation associated to the matrix $A$
and we set $Y_k := | \uy_k |$ for $k \ge 1$. 
We assume that the quotient $Y_{k+1} / Y_k$ tends to infinity with $k$. 
Let $\delta$ be in $(0, 1/2)$. 
Let $\ux$ be in ${\cS}_{\delta}$, that is, such that
\begin{equation}\label{eqn:5.1}
\Vert y_{k, 1} x_1 + \ldots + y_{k, n} x_n
\Vert
\ge {\delta}, \quad \hbox{for all $k \ge 1$}.   
\end{equation} 
Let  $\uq$ be a non-zero  integer $m$-tuple and let  $k$ be the index
defined
by the inequalities
$$
Y_{k} \le (2 m \delta^{-1})^{m/n} \, \vert \uq \vert^{m/n} <
Y_{k+1}.
$$
Taking into account that $M(\uy_k) \le Y_{k+1}^{-n/m}$, the inequality \eqref{eqn:5.1} and 
$$
\Vert y_1 x_1 + \dots + y_n x_n \Vert
\le n\vert \uy \vert \max_{1\le i\le n}\Vert L_i(\uq) - x_i \Vert
+ m \vert \uq \vert M(\uy)
$$
applied for $\uy = \uy_{k}$ give
$$
\delta   \le n \, Y_k \, \Vert A\uq - \ux \Vert +
m \vert \uq \vert \, Y_{k+1}^{-n/m}, 
$$
thus,
$$
\delta   \le n (2 m \delta^{-1})^{m/n} \, \vert \uq \vert^{m/n}
\, \Vert A\uq - \ux \Vert +
m  (2 m \delta^{-1})^{-1}.
$$
Consequently, we get
$$
\Vert A\uq - \ux \Vert \ge {\delta  \over 2 n (2 m \delta^{-1})^{m/n}} \,
\vert \uq \vert^{-m/n}. 
$$
By letting $\delta$ tend to $1/2$, this completes the proof of the second assertion of Theorem~\ref{thm:1.6}. 

For the first assertion, under the assumption that $Y_k^{1/k}$ tends to infinity, we proceed 
as in the proof of Theorem \ref{thm:1.3} to extract a subsequence of $(\uy_{\varphi(k)})_{k \ge 1}$ 
of $(\uy_k)_{k \ge 1}$ with the property that 
$$
Y_{\varphi(k)} \ge R Y_{\varphi(k-1)},   \quad
Y_{\varphi(k-1) + 1} \ge Y_{\varphi(k)} / R, 
\quad \hbox{for $k \ge 2$,}
$$
where $R = 4n/ (1 - 2 \delta) + 1$ is given by Theorem \ref{thm:5.1}. Then, everything goes 
exactly as above. We omit the details.
\end{proof}

\section*{Acknowledgement}   
DK is supported by the NRF of Korea (NRF-2015R1A2A2A01007090).
SL is supported by Samsung Science and Technology Foundation under Project No. SSTF-BA1601-03.
MR is supported by National Science Centre grant 2014/13/B/ST1/01033 (Poland).


\end{document}